\newcommand{\R}{{\mathbb R}}
\newcommand{\ren}{{\mathbb R}^N}
\newcommand{\N}{{\mathbb N}}
\newcommand{\be}[1]{\begin{equation}\label{#1}}
\newcommand{\ee}{\end{equation}}
\newcommand{\prf}{\par\smallskip\noindent{\sl Proof. \/}}
\newcommand{\finprf}{\unskip\null\hfill$\;\square$\vskip 0.3cm}
\newenvironment{proof}{\prf}{\finprf}
\newtheorem{theorem}{Theorem}[section]
\newtheorem{lemma}[theorem]{Lemma}
\newtheorem{corollary}[theorem]{Corollary}
\newtheorem{proposition}[theorem]{Proposition}
\newtheorem{remark}[theorem]{Remark}
\newtheorem{definition}[theorem]{Definition}
\def\qed{\,\unskip\kern 6pt \penalty 500
\raise -2pt\hbox{\vrule \vbox to8pt{\hrule width 6pt
\vfill\hrule}\vrule}\par}
\definecolor{darkblue}{rgb}{0.05, .05, .65}
\definecolor{darkgreen}{rgb}{0.1, .65, .1}
\definecolor{darkred}{rgb}{0.8,0,0}
\begin{document}
\title{Ground States for Diffusion Dominated Free Energies with Logarithmic Interaction}
\author{Jos\'e Antonio Carrillo\footnote{Department of Mathematics, Imperial College London, SW7 2AZ London, United Kingdom. Email:
carrillo@imperial.ac.uk}
\and
Daniele Castorina\footnote{Dipartimento di Matematica, Universit\`{a} di Roma ``Tor Vergata'', Via della Ricerca Scientifica, 00133
Roma, Italy. Email: castorin@mat.uniroma2.it}
\and
Bruno Volzone\footnote{Dipartimento di Ingegneria, Universit\`{a} degli Studi di Napoli ``Parthenope'',
80143 Italia. E-mail: bruno.volzone@uniparthenope.it}}
\maketitle

\begin{abstract}
Replacing linear diffusion by a degenerate diffusion of porous medium type is known to regularize the classical two-dimensional parabolic-elliptic Keller-Segel model \cite{CalCar06}. The implications of nonlinear diffusion are that solutions exist globally and are uniformly bounded in time. We analyse the stationary case showing the existence of a unique, up to translation, global minimizer of the associated free energy. Furthermore, we prove that this global minimizer is a radially decreasing compactly supported continuous density function which is smooth inside its support, and it is characterized as the unique compactly supported stationary state of the evolution model. This unique profile is the clear candidate to describe the long time asymptotics of the diffusion dominated classical Keller-Segel model for general initial data.
\end{abstract}

\section{Introduction}

Ground states of free energies play a crucial role in the long
time asymptotics of nonlinear aggregation diffusion models. These
nonlocal partial differential equations are ubiquitous in the
mathematical modelling of phenomena which involve a large number of particles.
For instance, nonlocal drift-diffusion equations show up naturally in
semiconductor modelling, bacterial chemotaxis, granular media, and many others, see  \cite{J,BDP,CMV1} and the
references therein. These equations are just based on two
competing mechanisms, namely the attraction, modelled by a nonlocal
force, and the repulsion, modelled by a nonlinear diffusion.

One of the archetypical models of this type is the so-called
classical parabolic-elliptic Keller-Segel model. This model was classically introduced as
the simplest description for chemotatic bacteria movement in which
linear diffusion tendency to spread fights the attraction due to
the logarithmic kernel interaction in two dimensions. Despite the
large amount of literature in this field, many advances have been
done in the last ten years thanks to the combination of different
ideas ranging from functional inequalities to entropy-entropy
dissipation techniques passing through optimal transport. We refer
to \cite{BDP,BCM,BCC,CD} and the references therein for some aspects
of this {\em fair competition} case in which there is a well-defined
critical mass. In fact, here a clear dichotomy arises: if the total
mass of the system is less than the critical mass, then the long time
asymptotics are described by a self-similar solution, while for a
mass larger than the critical one, there is finite time blow-up.
For the exact critical mass case, a detailed study has also been
performed in \cite{BCM,BCC}.

The existence of a well-defined critical mass can be generalized
to more dimensions if one allows for degenerate diffusions. In
fact, let us consider the evolution of the probability density
$\rho$ given by
\begin{equation}
\rho_t=\Delta
\rho^{m}+\nabla\cdot\left(\rho(\nabla\mathcal{W}\ast\rho)\right)\quad\text{
in }\R^d
\label{KellerSg}
\end{equation}
where $d\geq 2$, and with the homogeneous kernel $\mathcal{W}(x) =
|x|^{\alpha}/\alpha$ with $-d<\alpha<0$. By choosing $\alpha=2-d$,
$d\geq 3$ and $m=2-2/d$, it was shown in \cite{CarBlanLau} that there
exist a critical mass and an exact dichotomy as in the classical
Keller-Segel model. This is just based on the fact that these
equations share a common structural setting, namely they have a
well-defined free energy functional given by
\begin{equation}\label{free}
E(\rho)=\frac1{m-1} \int_{\mathbb{R}^d} \rho^m(x) \,dx
+\frac{1}{2\alpha}\int_{\mathbb{R}^d}\int_{\mathbb{R}^d}|x-y|^{\alpha}\rho(x)\rho(y)
\,dx\,dy\,,
\end{equation}
and that the two terms in \eqref{free} scale equally by dilations
in that particular case. Actually, this fact is also satisfied by
all the {\em fair competition} cases in which $m=(d-\alpha)/d$.
Therefore, there is a well-defined critical mass in all the fair
competition cases by generalizing the arguments in \cite{BCC}.
While the analysis of the fair competition cases can be considered
advanced, it is not so for both the {\em diffusion dominated} case,
$m>(d-\alpha)/d$, and the {\em aggregation dominated}
case, $m<(d-\alpha)/d$. Regarding the latter, recent results in
\cite{BianLiu} discriminate blow-up and global existence depending on the
initial conditions and on the exponent $m$ for the particular
case of $\alpha=2-d$, $d\geq 3$. Other results in this direction also appear in
a series of papers by Sugiyama \cite{S1,S2,S3}. However, in the
diffusion dominated case, $m>(d-\alpha)/d$, there is little information about
the long time asymptotics, seemingly due to the lack of confinement by
the interaction kernel, see \cite{BianLiu,S2}. It is actually
proved that solutions exist globally, and that they are bounded
uniformly in time without further information about their behavior
at infinity. Existence of steady states in
the case $\alpha=2-d$, $d\geq 3$, $m>2-2/d$, has been analyzed in \cite{BianLiu}.

In this manuscript, we build up in the understanding of the long
time asymptotics in the classical diffusion dominated case in two dimensions.
Calvez and the first author proved in \cite{CalCar06} that solutions
corresponding to the classical diffusion dominated two-dimensional Keller Segel
model:
\begin{equation}
\rho_t=\Delta
\rho^{m}+\tfrac1{2\pi}\nabla\cdot\left(\rho(\nabla\log|x|\ast\rho)\right)\quad\text{
in }\R^2\,,\qquad \mbox{ with $m>1$,}\label{KellerS}
\end{equation}
exist globally and are uniformly
bounded in time. However, they were not able to clarify the long time asymptotics.
Here we encounter once again the structural setting of a free energy functional given by
\begin{align}
\mathsf{G}[\rho]:=\frac{1}{m-1}\int_{\R^{2}}\rho^{m}\,dx+\frac{1}{4\pi}\int_{\R^{2}}\int_{\R^{2}}\log|x-y|\,\rho(x)\rho(y)\,dx\,dy\,.
\label{eq.1}
\end{align}
Since the free energy $\mathsf{G}[\rho]$ is decaying in time through the evolution of the flow associated to \eqref{KellerS}, one may expect convergence towards
the possible (global) minimizers of $\mathsf{G}[\rho]$ over mass densities. Due to the translational invariance of \eqref{KellerS}, we will consider the set of
admissible functions for the functional $\mathsf{G}[\rho]$ as the set of \emph{zero center-of-mass densities}
\begin{equation*}
\mathcal{Y}_{M}:=\left\{\rho\in L_{+}^{1}(\R^{2})\cap L^{m}(\R^{2}):\,\|\rho\|_{1}=M,\int_{\R^{2}}x\rho(x)\,dx=0\right\}
\end{equation*}
for a given \emph{mass} $M>0$.

This work is entirely devoted to show the existence of a unique global minimizer of the free energy $\mathsf{G}[\rho]$ in
$\mathcal{Y}_{M}$. Furthermore, we will show that this global minimizer is a radially decreasing compactly supported continuous density function smooth inside
its support, and that is characterized as the unique (up to translations) compactly supported stationary state of the diffusion dominated Keller-Segel model \eqref{KellerS} with
$m>1$. This unique profile is the clear candidate to describe the long time asymptotics of the evolution model \eqref{KellerS} for general initial data, that
will be treated elsewhere. Finally, we point out that for the model \eqref{KellerSg} with $d\geq3$ and in the diffusion dominated case $m>2-2/d$, this asymptotic regime is shown in \cite[Theorem 5.6, Corollary 5.9]{Kim}, in the class of radially symmetric, continuous, and compactly supported initial data.

From the technical point of view, we cannot resort to classical concentration-compactness principles as used in \cite{Li1,Li2,CarBlanLau,Bed1}, which are very
related to homogeneous kernels as in \eqref{free}. Actually, we take advantage of the logarithmic interaction kernel to show the confinement of the density in
Section 2. This is the basic brick to show the existence of global minimizers that are radially decreasing due to symmetric decreasing rearrangement techniques.
We further identify them and show that they are compactly supported and smooth in their support in Section 3 using variational techniques. Finally, a non
standard application of the moving plane method in Section 4 shows that compactly supported stationary states of \eqref{KellerS} coincide with the unique up to
translation global minimizer of the previous sections.


\section{Minimization of the free energy functional}

Our goal is to minimize the functional $\mathsf{G}[\rho]$ given by \eqref{eq.1}
defined on $\mathcal{Y}_{M}$ for a given \emph{mass} $M>0$. Set
$\mathsf{G}[\rho]=\mathsf{H}[\rho]+\mathsf{W}[\rho]$ where
\begin{equation*}
\mathsf{H}[\rho]:=\frac{1}{m-1}\int_{\R^{2}}\rho^{m}dx
\end{equation*}
is the \emph{entropy functional}, defined on $L_{+}^{m}(\R^2)$, while
\begin{equation*}
\mathsf{W}[\rho]:=-\frac{1}{2}\int_{\R^2}(\mathcal{K}\ast\rho)(x)\rho(x)dx
\end{equation*}
is the \emph{interaction functional}, where
\begin{equation*}
\mathcal{K}(x):=-\frac{1}{2\pi}\log|x|\,.
\end{equation*}

Let us first check that
$\mathsf{W}[\rho]>-\infty$ in this class. Notice that for each
$\rho\in\mathcal{Y}_{M}$, H\"{o}lder's inequality implies that
\begin{align*}
\int_{\R^{2}}\int_{\R^{2}}\log^{-}|x-y|\,\rho(x)\rho(y)\,dx\,dy&=\int_{\R^{2}}\int_{|x-y|\leq
1}\left|\log|x-y|\right|\,\rho(x)\rho(y)\,dx\,dy\\&\leq\|\rho\|_{m}\int_{\R^{2}}\left(\int_{|x-y|\leq
1}|\log|x-y||^{m^{\prime}}dy\right)^{1/m^{\prime}}\rho(x)\,dx\\
&\leq C\,M \|\rho\|_{m}\nonumber,
\end{align*}
where $m^{\prime}=m/(m-1)$ and $C$ is a positive constant. Then,
we have that $\mathsf{G}[\rho]\in(-\infty,\infty]$ in
$\mathcal{Y}_{M}$. Let us define the class of radial densities as
\begin{equation*}
\mathcal{Y}^{rad}_{M}:=\left\{\rho\in L_{+}^{1}(\R^{2})\cap
L^{m}(\R^{2}):\,\|\rho\|_{1}=M,\rho=\rho^{\#}\right\}\,,
\end{equation*}
where $\rho^{\#}$ is the the \emph{spherical decreasing
rearrangement of} $\rho$, see for instance
\cite{Hardy,Bennett,Talenti} for the basic definitions and related
properties.

\begin{theorem}\label{varthm}
For any positive mass $M$, there exists a global radial minimizer
$\rho_{0}\in\mathcal{Y}^{rad}_{M}$ of the free energy functional
$\mathsf{G}$ in $\mathcal{Y}_{M}$. Moreover, global minimizers satisfy $\rho_0 \log
\rho_0 \in L^{1}(\R^{2})$.
\end{theorem}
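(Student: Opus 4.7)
The plan follows the direct method of the calculus of variations: produce a minimizing sequence, symmetrize it, extract a weak-$L^m$ limit via compactness, and identify it with a minimizer. Both the integrability $\rho_0\log\rho_0\in L^1$ and the log-moment bound that underlies it will fall out of the very estimates used in the compactness step.

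First, the H\"older computation already carried out before the theorem gives $\int\!\!\int\log^-|x-y|\,\rho(x)\rho(y)\,dx\,dy\leq CM\|\rho\|_m$, whence
\begin{align*}
\mathsf{G}[\rho]\geq \frac{1}{m-1}\|\rho\|_m^m-\frac{CM}{4\pi}\|\rho\|_m.
\end{align*}
Because $m>1$, the right-hand side is bounded below and coercive in $\|\rho\|_m$, so $\inf_{\mathcal{Y}_M}\mathsf{G}$ is finite and every minimizing sequence $\{\rho_n\}$ is bounded in $L^m$. To reduce to radial profiles I apply the symmetric decreasing rearrangement: invariance of $\mathsf{H}$ is classical, while for $\mathsf{W}$ I use the identity $-\log|x-y|=\lim_{\lambda\to 0^+}(|x-y|^{-\lambda}-1)/\lambda$, apply the Riesz rearrangement inequality to the nonnegative kernel $|x-y|^{-\lambda}$ (which equals its own rearrangement), subtract the equal constants $\lambda^{-1}M^2$, and pass to the limit by dominated convergence. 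This gives $\mathsf{W}[\rho^{\#}]\leq\mathsf{W}[\rho]$ and hence $\mathsf{G}[\rho^{\#}]\leq\mathsf{G}[\rho]$, so we may take $\rho_n\in\mathcal{Y}^{rad}_M$; each $\rho_n$ is then radially decreasing, centered at the origin, and pointwise dominated by $M/(\pi|x|^2)$.

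The main obstacle is tightness, and the logarithmic kernel does the work. Newton's shell theorem for the logarithmic potential applied to the radial density $\rho_n$ yields
\begin{align*}
\int_{\R^2}\log|x-y|\,\rho_n(x)\,dx = \log|y|\,M_n(|y|)+\int_{|y|}^{\infty}\log r\,\sigma_n(r)\,dr,
\end{align*}
where $M_n(r):=\int_{B_r}\rho_n$ and $\sigma_n(r):=2\pi r\,\rho_n(r)$. For $|y|\geq 1$ the second integral exceeds $\log|y|\,(M-M_n(|y|))$, so the full expression is $\geq M\log|y|$; for $|y|<1$ the same bound holds up to an additive $-C\|\rho_n\|_m$ coming from the H\"older estimate on $\int_0^1|\log r|\sigma_n\,dr$. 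Integrating against $\rho_n(y)$ and using $\mathsf{W}[\rho_n]\leq \mathsf{G}[\rho_n]\leq C$ (as $\mathsf{H}\geq 0$) yields the uniform bound
\begin{align*}
\int_{|y|\geq 1}\rho_n(y)\log|y|\,dy\leq C',
\end{align*}
which rules out escape: if $\int_{|y|>R_n}\rho_n\geq\varepsilon$ with $R_n\to\infty$, the left side would be at least $\varepsilon\log R_n\to\infty$. Hence $\{\rho_n\}$ is tight and carries a uniform log-moment.

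Tightness, the $L^m$-bound, and Helly's selection on the radial profiles produce a subsequence with $\rho_n\to\rho_0$ pointwise a.e.\ and weakly in $L^m$, with $\rho_0\in\mathcal{Y}^{rad}_M$. Convexity gives lower semicontinuity of $\mathsf{H}$; for $\mathsf{W}$ I split $\log=\log^+-\log^-$, using Fatou on the $\log^+$ part and the uniform bound $\|\log^-\ast\rho_n\|_{L^{\infty}}\leq C\|\rho_n\|_m$ together with a.e.\ convergence for dominated convergence on the $\log^-$ part. This yields $\mathsf{G}[\rho_0]\leq\liminf\mathsf{G}[\rho_n]=\inf_{\mathcal{Y}_M}\mathsf{G}$, so $\rho_0$ is a minimizer. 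Finally, for $\rho_0\log\rho_0\in L^1$: the positive part is controlled by the elementary inequality $x\log x\leq(x^m-x)/(m-1)$, giving $\int(\rho_0\log\rho_0)_+\leq(m-1)^{-1}(\|\rho_0\|_m^m-M)<\infty$. The log-moment $\int\rho_0\log(1+|y|)\,dy$ passes to the limit by Fatou, so it is finite; splitting $\{\rho_0<1\}$ at $\rho_0=(1+|y|)^{-2-\delta}$ for small $\delta>0$ and using the monotonicity of $t\mapsto t\log(1/t)$ on $(0,1/e)$ then gives the pointwise bound
\begin{align*}
\rho_0\log(1/\rho_0)\leq (2+\delta)\bigl[\rho_0\log(1+|y|)+(1+|y|)^{-2-\delta}\log(1+|y|)\bigr]
\end{align*}
(outside a bounded set where $t\log(1/t)\leq 1/e$), whose integral is finite.
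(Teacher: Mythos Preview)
Your argument is correct and in several respects more elementary than the paper's. The main departures are these. (i) For the lower bound and $L^m$-coercivity the paper invokes the logarithmic HLS inequality together with a density argument, whereas you get both directly from $\mathsf{W}[\rho]\geq -\tfrac{CM}{4\pi}\|\rho\|_m$ (the $\log^+$ contribution being nonnegative); this sidesteps log-HLS entirely. (ii) For confinement, the paper bounds $\mathsf{W}_1[\rho]$ via the half-space $\{x\cdot y\le 0\}$ and radiality; your Newton shell-theorem identity $\int\log|x-y|\rho_n(x)\,dx\geq M\log|y|$ for $|y|\geq 1$ produces the same uniform log-moment bound by a cleaner route. (iii) For lower semicontinuity the paper uses the three-piece splitting $\mathsf{A}^\varepsilon+\mathsf{B}^\varepsilon+\mathsf{W}_R$ and weak convergence of product measures; you instead upgrade to pointwise a.e.\ (Helly) and hence strong $L^1$ convergence (tightness plus the $L^m$-uniform integrability), then use Fatou on $\log^+$ and the bound $\|\log^-\!\ast\rho_n\|_\infty\leq C$ for the $\log^-$ piece---note that the latter really relies on the strong $L^1$ convergence, not literal dominated convergence of the double integrand, so you should phrase it as $\int(\log^-\!\ast\rho_n)\rho_n\to\int(\log^-\!\ast\rho_0)\rho_0$ via $\|\rho_n-\rho_0\|_1\to 0$. (iv) Your direct proof of $\rho_0\log\rho_0\in L^1$ is explicit where the paper leaves it essentially as a byproduct of the log-moment. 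One detail to tighten: in the rearrangement step the passage $(|x-y|^{-\lambda}-1)/\lambda\to -\log|x-y|$ needs a dominating function on $\{|x-y|<1\}$; restricting to $\lambda\in(0,\lambda_0]$ with $\lambda_0<2-2/m$ and dominating by $|x-y|^{-\lambda_0}\rho(x)\rho(y)$ (integrable by H\"older since $|\cdot|^{-\lambda_0}\in L^{m'}(B_1)$) supplies it.
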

\begin{proof}
We split the proof in three parts, proving first that global
minimizers must be radial, and thus we can restrict our study to
$\mathcal{Y}^{rad}_{M}$. We next show that the functional
$\mathsf{G}$ is bounded from below in $\mathcal{Y}^{rad}_{M}$, and
finally that the infimum is achieved in $\mathcal{Y}^{rad}_{M}$.

\par\noindent\emph{Step 1.~ The candidates to be global minimizers of $\mathsf{G}$ are radial.\/}\\
As soon as the interaction term $\mathsf{W}[\rho]$ is finite, the
interaction functional $\mathsf{W}[\rho]$ decreases under
rearrangement as proven in \cite[Lemma 2]{CarlenLoss}, in the
sense that
\begin{equation}
\int_{\R^{2}}\int_{\R^{2}}\log|x-y|\,\rho(x)\rho(y)\,dx\,dy\geq
\int_{\R^{2}}\int_{\R^{2}}\log|x-y|\,\rho^{\#}(x)\rho^{\#}(y)\,dx\,dy\,.
\label{logineqr}
\end{equation}
This shows that
\[
\inf_{\mathcal{Y}_{M}} \mathsf{G}=\inf_{\mathcal{Y}^{rad}_{M}}
\mathsf{G}\,.
\]
Actually, all the minimizers of $\mathsf{G}$ in the class
$\mathcal{Y}_{M}$ must be radially decreasing, i.e. they lay in
the class $\mathcal{Y}_{M}^{rad}$. Indeed, if $\rho$ is a global
minimizer in $\mathcal{Y}_{M}$, by inequality \eqref{logineqr} we
have that $\rho^{\#}$ is a radially decreasing global minimizer of
$\mathsf{G}$. Since the $L^m$-norms of $\rho$ and $\rho^{\#}$ are
equal, from $\mathsf{G}[\rho]=\mathsf{G}[\rho^{\#}]$ we deduce
that
\[
\int_{\R^{2}}\int_{\R^{2}}\log|x-y|\,\rho(x)\rho(y)\,dx\,dy=\int_{\R^{2}}\int_{\R^{2}}\log|x-y|\,\rho^{\#}(x)\rho^{\#}(y)\,dx\,dy\,.
\]
Hence, using \cite[Lemma 2]{CarlenLoss} again, we find that $\rho$
must be a translation of $\rho^{\#}$, that is
$\rho(x)=\rho^{\#}(x-y)$ for some $y\in \R^{2}$. Moreover, we have
\[
\int_{\R^2}x\rho(x)dx=yM+\int_{\ren}x\rho^{\#}(x)dx=yM,
\]
and thus the zero center-of-mass condition holds if and only if
$y=0$, giving $\rho=\rho^{\#}$, namely $\rho$ is radially
decreasing.

\par\noindent\emph{Step 2.~$\mathsf{G}$ is bounded from below in $\mathcal{Y}^{rad}_{M}$.\/  }
Here, we follow arguments from \cite{CalCar06}. For any
$\rho\in\mathcal{Y}_{M}$ such that
$$
\rho\,log\rho\,,\quad \rho\,\log(1+|x|^{2})\in L^1(\R^{2}),
$$
the \emph{logarithmic Hardy-Littlewood-Sobolev inequality}
\cite{CarlenLoss} implies that there exists a constant $C(M)>0$
such that
\begin{equation}
\int_{\R^{2}}\rho\log\rho\,dx\geq-\frac{2}{M}\int_{\R^{2}}\int_{\R^{2}}\log|x-y|\,\rho(x)\rho(y)\,dx\,dy-C(M)\,.
\label{eq.2}
\end{equation}

Let us start by showing a bound from below in a restricted class
of densities. Consider $\rho$ in $\mathcal{Y}^{rad}_{M}$ and first
assume that $\rho$ is continuous, with compact support. Applying
\eqref{eq.2} to \eqref{eq.1} we have
\begin{equation}
\mathsf{G}[\rho]\geq-\frac{M}{8\pi}C(M)+\int_{\R^{2}}\left(\frac{\rho^{m}}{m-1}-\frac{M}{8\pi}\,\rho\,\log\rho\right)dx\label{eq.3}.
\end{equation}
Now, let us choose $\theta\in(0,1)$ and a value $\kappa_{\,
\theta,m}>1$ such that
$$
\theta\frac{r^{m}}{m-1}-\frac{M}{8\pi}\,r\,\log r>0\quad\forall
r>\kappa_{\theta,m}.
$$
Then, we deduce
\begin{align*}
\int_{\R^2}\rho\,\log\rho\,dx&=\int_{\rho\leq1}\rho\,\log\rho\,dx+\int_{1<\rho\leq\kappa_{\theta,m}}\rho\,\log\rho\,dx+\int_{\rho>\kappa_{\theta,m}}\rho\,\log\rho\,dx\\
&\leq M\log\kappa_{\theta,m}+\frac{8\pi}{M(m-1)}\,\theta\int_{\R^{2}}\rho^{m}dx,\label{LlogL}
\end{align*}
and therefore
\[
\int_{\R^{2}}\left(\frac{\rho^{m}}{m-1}-\frac{M}{8\pi}\,\rho\,\log\rho\right)dx\geq-\frac{M^2}{8\pi}\log\kappa_{\theta,m}+\frac{1-\theta}{m-1}\int_{\R^{2}}\rho^{m}dx\,.
\]
Hence, we infer from \eqref{eq.3} that
\begin{equation}
\mathsf{G}[\rho]\geq-\frac{M}{8\pi}\left(M\log\kappa_{\theta,m}+C(M)\right)+\frac{1-\theta}{m-1}\int_{\R^{2}}\rho^{m}dx\label{G}.
\end{equation}

This bound from below being only dependent on the $L^m$-norm can
be extended to $\mathcal{Y}^{rad}_{M}$ by a density argument that
we detail next. If $\rho\in \mathcal{Y}^{rad}_{M}$ is less
regular, let us take a nondecreasing sequence of radially
decreasing, compactly supported, continuous nonnegative functions
$\widetilde{\rho}_{n}$ converging strongly to $\rho$ in $L^{1}(\R^{2})\cap
L^{m}(\R^2)$: such choice is always possible, since we can
approximate $\rho$ first by smooth functions, then the sequence of
their rearrangements satisfies the required conditions by the
$L^p$-contraction property of the rearrangement map. If
$\|\widetilde{\rho}_{n}\|_{1}=M_{n}$, let us construct the
sequence
\[
\rho_{n}:=\frac{M}{M_{n}}\,\widetilde{\rho}_{n}.
\]
Thus $\rho_{n}\in \mathcal{Y}^{rad}_{M}$. Besides, from
$M_{n}\nearrow M$, we get $\rho_{n}\nearrow\rho$ strongly in
$L^{1}(\R^{2})\cap L^{m}(\R^2)$, $\|\rho_{n}\|_{1}=M$, and we can apply
inequality \eqref{eq.3} to deduce
\begin{equation}
\mathsf{G}(\rho_{n})\geq-\frac{M}{8\pi}\left(M\log\kappa_{\theta,m}+C(M)\right)+\frac{1-\theta}{m-1}\int_{\R^{2}}\rho_{n}^{m}dx\,.
\label{Gn}
\end{equation}
H\"{o}lder's inequality implies
\begin{align*}
&\left|\int_{\R^{2}}\int_{|x-y|\leq1}\log|x-y|\,\left(\rho_{n}(x)\rho_{n}(y)-\rho(x)\rho(y)\right)\,dx\,dy\right|\\
&\qquad=\left|\int_{\R^{2}}\int_{|x-y|\leq1}\log|x-y|\left[(\rho_{n}(x)-\rho(x))\rho_{n}(y)+(\rho_{n}(y)-\rho(y))\rho(x)\right]\,dx\,dy\right|\\
&\qquad\leq\|\rho_{n}-\rho\|_{m}\int_{\R^{2}}\left[\left(\int_{|x-y|\leq1}|\log|x-y||^{m^{\prime}}\,dx\right)^{1/m^{\prime}}\rho_{n}(y)\right]dy\\
&\qquad\,\,\,\,\,\,+\|\rho_{n}-\rho\|_{m}\int_{\R^{2}}\left[\left(\int_{|x-y|\leq1}|\log|x-y||^{m^{\prime}}\,dy\right)^{1/m^{\prime}}\rho(x)\right]dx\\
&\qquad=2C\,M\|\rho_{n}-\rho\|_{m}\rightarrow0\,.
\end{align*}
Concerning the positive part of $\log|x-y|$, since $\rho_n$ is a
nondecreasing sequence converging to $\rho$, we have by the
monotone convergence theorem that
\[
\int_{\R^{2}}\int_{|x-y|>1}\log^{+}|x-y|\,\rho_{n}(x)\rho_{n}(y)\,dx\,dy\nearrow\int_{\R^{2}}\int_{|x-y|>1}\log^{+}|x-y|\,\rho(x)\rho(y)\,dx\,dy\,,
\]
as $n\rightarrow\infty$, and thus $\mathsf{H}(\rho_n)\rightarrow
\mathsf{H}(\rho)$, $\mathsf{W}(\rho_n)\rightarrow
\mathsf{W}(\rho)$ as $n\rightarrow\infty$. Hence, we can pass to
the limit in \eqref{Gn} and get \eqref{G} in
$\mathcal{Y}^{rad}_{M}$.

\par\noindent\emph{Step 3.~The infimum of $\mathsf{G}$ is achieved in $\mathcal{Y}^{rad}_{M}$.\/}\\
Let
\begin{equation*}
\mathcal{I}:=\inf_{\rho\in \mathcal{Y}^{rad}_{M}}\mathsf{G}(\rho)
\end{equation*}
and let us choose a minimizing sequence of $\mathsf{G}$,
\emph{i.e.} a sequence $\left\{\rho_{n}\right\}_{n\in \N}$ in
$\mathcal{Y}^{rad}_{M}$ such that
\begin{equation}
\mathsf{G}[\rho_{n}]\rightarrow\mathcal{I}\quad\,\text{as }n\rightarrow\infty.\label{eq.4}
\end{equation}
By the control of the functional $\mathsf{G}$ in \eqref{G}, we get
that $\left\{\rho_{n}\right\}_{n\in\N}$ is bounded in
$L^{m}(\R^2)$ hence by \eqref{eq.4} it follows that
$\left\{\mathsf{W}[\rho_{n}]\right\}_{n\in\N}$ is bounded. In
order to control the behavior at infinity, we follow similar
arguments as in \cite{McCann97} and \cite[Proposition
7.10]{TheseCalvez}. For all $R\geq1$ and any $\rho\in
L_{+}^{1}(\R^{2})\cap L^{m}(\R^{2})$, define the functional
\begin{equation}
\mathsf{W_{R}}[\rho]:=\int_{\R^{2}}\int_{|x-y|>R}\log|x-y|\,\rho(x)\rho(y)\,dx\,dy.\label{W_R}
\end{equation}
By H\"{o}lder's inequality, we have
\begin{align}
&\mathsf{W_{R}}[\rho]=\int_{\R^{2}}\int_{|x-y|>R}\log|x-y|\,\rho(x)\rho(y)\,dx\,dy\label{eq.5}\\
&=\int_{\R^{2}}\int_{\R^{2}}\log|x-y|\,\rho(x)\rho(y)\,dx\,dy+\int_{\R^{2}}\int_{|x-y|\leq
R}|\log|x-y||\,\rho(x)\rho(y)\,dx\,dy\nonumber\\
&\leq \mathsf{W}[\rho]+\|\rho\|_{m}\int_{\R^{2}}\left(\int_{|x-y|\leq R}|\log|x-y||^{m^{\prime}}dy\right)^{1/m^{\prime}}\rho(x)\,dx\leq \mathsf{W}[\rho]+C\,M
\|\rho\|_{m}\nonumber,
\end{align}
where $m^{\prime}=m/(m-1)$. In particular, by \eqref{eq.5} it
follows that $\left\{\mathsf{W}_{R}[\rho_{n}]\right\}_{n\in\N}$ is
bounded. Now, let $x\in\R^{2}$ with $|x|\geq1$ and notice that $
\left\{y\in\R^{2}:\,x\cdot
y\leq0\right\}\subset\left\{y\in\R^{2}:\,|x-y|\geq1\right\}. $
Then, since  $\rho$ is nonnegative, for all $R\geq1$ we get
\begin{equation}
\mathsf{W_{1}}[\rho]\geq\int_{|x|>R}\int_{x\cdot y\leq0}\log|x-y|\rho(x)\rho(y)\,dx\,dy\label{eq.29}.
\end{equation}
Since $x\cdot y\leq0$ implies $|x-y|\geq|x|$, we infer from \eqref{eq.29} that
\begin{align}
&\mathsf{W_{1}}[\rho]\geq\int_{|x|>R}\int_{x\cdot y\leq0}\log|x|\rho(x)\rho(y)\,dx\,dy\label{eq.31},
\end{align}
then if we assume $\rho=\rho^{\#}$, we find
\begin{equation}
\mathsf W_{1}[\rho]\geq\log R\,\int_{|x|>R}\int_{x\cdot
y\leq0}\rho(x)\rho(y)\,dx\,dy=\frac{M\log
R}{2}\int_{|x|>R}\rho(x)\,dx. \label{eq.6}\end{equation} Thus the
fact that $\left\{\mathsf{W}[\rho_{n}]\right\}_{n\in\N}$ is
bounded and \eqref{eq.6} yield
\begin{equation}
\sup_{n\in\N}\int_{|x|>R}\rho_{n}(x)\,dx\leq \frac{C}{\log R}\underset{R\rightarrow\infty}{\longrightarrow0}\label{eq.7}
\end{equation}
that is the so called \emph{confinement of the mass}. In order to
check that $\left\{\rho_{n}\right\}$ is locally
\emph{equi-integrable}, we just observe that for given
$\varepsilon>0$, setting
\begin{equation}\label{eq.72}
\mathsf{a}:=\sup_{n\in\N}\|\rho_{n}\|_{m}<\infty
\end{equation}
for any subset $A$ of $\R^{N}$ such that
$|A|<\delta:=(\varepsilon/\mathsf{a})^{m^{\prime}}$ we have, by
H\"{o}lder's inequality
\[
\int_{A}\rho_{n}\,dx\leq\mathsf{a}\,|A|^{\frac{m-1}{m}}<\varepsilon.
\]
for all $n\in\N$, that is the sequence
$\left\{\rho_n\right\}_{n\in\N}$ is equi-integrable. According to
Dunford-Pettis theorem using \eqref{eq.7} and \eqref{eq.72}, there
exists a function $\rho_{0}\in L^{1}_{+}(\R^{2})\cap
L^{m}(\R^{2})$ such that (up to subsequence)
\begin{equation}
\rho_{n}\rightharpoonup \rho_{0}\quad\text{weakly in }L^{1}(\R^{2})\cap L^{m}(\R^{2})\label{eq.8}
\end{equation}
and $\|\rho_0\|_{1}=M$. Furthermore
\[
\|\rho_{0}\|_{m}\leq\liminf_{n\rightarrow\infty}\|\rho_{n}\|_{m}\leq C.
\]
In particular, the interaction energy $\mathsf{W}[\rho_{0}]$ of
$\rho_0$ is bounded from below because the functional $\mathsf{G}$
is. Our aim is now to show that $\mathsf{W}$ is lower
semicontinuous with respect to the $L^{1}\cap L^{m}-$ weak
convergence, taking advantage of some arguments shown in
\cite{BlaCaCarSiam}. Then fix $\varepsilon\in(0,1)$, $R>1$ and
write
\[
\mathsf{W}[\rho_{n}]=\mathsf{A^{\varepsilon}}[\rho_{n}]+\mathsf{B^{\varepsilon}}[\rho_{n}]+\mathsf{W_{R}}[\rho_{n}]
\]
where
\begin{equation*}
\left.
\begin{array}
[c]{l}%
\mathsf{A^{\varepsilon}}[\rho]:={\displaystyle\int_{\R^{2}}\int_{|x-y|\leq\varepsilon}\log|x-y|\,\rho(x)\rho(y)\,dx\,dy}\\
\\
\mathsf{B^{\varepsilon}}[\rho]:={\displaystyle\int_{\R^{2}}\int_{\varepsilon<|x-y|\leq R}\log|x-y|\,\rho(x)\rho(y)\,dx\,dy}\\
\\
\end{array}
\right.
\end{equation*}
and the functional $\mathsf{W}_{R}$ is defined in \eqref{W_R}.
We notice that the same arguments used to prove inequality \eqref{eq.5} yield
\[
\mathsf{A^{\varepsilon}}[\rho_{n}]\leq CM\|\rho_n\|_{m}\left(\int_{0}^{\varepsilon}r|\log r|^{m^{\prime}}dr\right)^{1/m^{\prime}}
\]
then
\begin{equation}
\mathsf{A^{\varepsilon}}[\rho_{n}]\rightarrow0\quad\text{as }\varepsilon\rightarrow0,\text{ uniformly in }n.\label{A}
\end{equation}
Observe that we can use the equi-integrability of the sequence
$\left\{\rho_{n}\right\}$ and the fact that
$\rho_{n}\rightharpoonup\rho_{0}$ weakly in $L_{+}^{1}(\R^{2})$ to
apply Lemma 2.3 in \cite{BlaCaCarSiam} and find
\begin{equation}
\rho_{n}\otimes\rho_{n}\rightharpoonup\rho_{0}\otimes\rho_{0}\quad\text{weakly in }L_{+}^{1}(\R^{2}\times\R^{2}).\label{eq.30}
\end{equation}
Then, since the function $\log|x-y|$ is bounded in $\left\{\varepsilon<|x-y|\leq R\right\}$ we have that
\begin{equation}
\mathsf{B^{\varepsilon}}[\rho_{n}]\rightarrow{\displaystyle\int_{\R^{2}}\int_{\varepsilon<|x-y|\leq
R}\log|x-y|\,\rho_0(x)\rho_0(y)\,dx\,dy}\quad\text{as }n\rightarrow\infty\label{B}
\end{equation}
It remains then to get a bound from below of the last integral
$\mathsf{W_{R}}[\rho_{n}]$, for large $n$. In order to do this, we
first point out that \eqref{eq.30} implies that the sequence of
densities $(\rho_{n}\otimes\rho_{n})(x,y)$ converges to
$(\rho_{0}\otimes\rho_{0})(x,y)$ in the weak-$*$ sense as
measures. Then using the fact that the function $\log^{+}|x-y|$ is
bounded from below and obviously lower semicontinuous in the set
$\left\{(x,y):x\in\R^{2},|x-y|>R\right\}$, inequality 5.1.15 in
\cite{AmbrGi} gives
\begin{equation}
\liminf_{n\rightarrow\infty}\mathsf{W_{R}}[\rho_{n}]\geq{\displaystyle\int_{\R^{2}}\int_{|x-y|>R}\log|x-y|\,\rho_0(x)\rho_0(y)\,dx\,dy}.\label{C}
\end{equation}
In particular, combining this last inequality with \eqref{eq.5} we
derive that $$(\log|\cdot|\ast\rho_{0})\rho_{0}\in L^{1}(\R^{2})$$
and $W[\rho_0]$ is finite. Now, using \eqref{B} and \eqref{C} we
get
\begin{multline*}
\liminf_{n\rightarrow\infty}\mathsf{W}[\rho_n]\geq\liminf_{n\rightarrow\infty}\mathsf{A^{\varepsilon}}[\rho_{n}]\\+{\displaystyle\int_{\R^{2}}\int_{\varepsilon<|x-y|\leq
R}\log|x-y|\,\rho_0(x)\rho_0(y)\,dx\,dy}\\+{\displaystyle\int_{\R^{2}}\int_{|x-y|>R}\log|x-y|\,\rho_0(x)\rho_0(y)\,dx\,dy}
\end{multline*}
thus letting $\varepsilon\rightarrow0$, property \eqref{A} implies
\[
\liminf_{n\rightarrow\infty}\mathsf{W}[\rho_n]\geq\mathsf{W}[\rho_0].
\]
Using \eqref{eq.8}, this gives in turn
\begin{equation}
\mathcal{I}=\liminf_{n\rightarrow\infty}\mathsf{G}[\rho_n]\geq\mathsf{H}[\rho_0]+\mathsf{W}[\rho_0].\label{minim}
\end{equation}
By taking the rearrangement $\rho_{0}^{\#}$ of $\rho_{0}$, since
as $\mathsf{W}[\rho_0]$ is finite, inequality \eqref{logineqr}
implies
\begin{equation}
\mathsf{W}[\rho_0]\geq \mathsf{W}[\rho^{\#}_0]\,,\label{eq.32}
\end{equation}
hence \eqref{minim} gives
\[
\mathcal{I}\geq\mathsf{G}[\rho^{\#}_0].
\]
With this we have finished the proof of existence of global radial
minimizers. Finally, we notice by \eqref{eq.32} that
$\mathsf{W}_{1}[\rho^{\#}_0]$ is finite, so for all $R\geq1$
inequality \eqref{eq.31} provides
\begin{align*}
&\int_{|x|>R}\int_{x\cdot y\leq0}\log|x|\rho_{0}^{\#}(x)\rho_{0}^{\#}(y)dx dy
\leq\mathsf{W}_{1}[\rho^{\#}_{0}]<\infty
\end{align*}
that is
\[
\frac{M}{2}\int_{|x|>R}\log|x|\,\rho_{0}^{\#}(x)dx\leq C
\]
namely $\rho^{\#}_{0}\log(1+|x|^{2})\in L^1({\R^2})$.
\end{proof}

\begin{remark}
Let us point out the the previous proof works in any dimension
since the logarithmic HLS inequality holds true with a constant
that depends only on the dimension and the mass. We also emphasize
that the use of the logarithmic potential is crucial here, since we do
not know how to prove a quantitative confinement property when the Newtonian
potential for dimensions larger than two is used instead.
\end{remark}


\section{Identification, regularity, and uniqueness of global minimizers}
Our aim is to show a full characterization of any  minimizer
$\rho_0$ of the functional $\mathsf{G}$ to relate them to the
steady states to the 2D Keller-Segel model. We first deduce the
Euler-Lagrange conditions satisfied by critical points of the
functional.

\begin{theorem}\label{idenminimth}
Let $\rho_0\in\mathcal{Y_{M}}$ be a global minimizer of the free
energy functional $\mathsf{G}$ defined in \eqref{eq.1}. Then
$\rho_0$ satisfies
\begin{equation}
\frac{m}{m-1}\rho_{0}^{m-1}-\mathcal{K}\ast\rho_{0}=
\mathsf{D}[\rho_{0}]\quad a.e. \text{ in }
\mbox{supp}(\rho_{0})\label{eq.33}
\end{equation}
and
\begin{equation}
\frac{m}{m-1}\rho_{0}^{m-1}-\mathcal{K}\ast\rho_{0}\geq
\mathsf{D}[\rho_{0}]\quad a.e. \text{ outside }
\mbox{supp}(\rho_{0})\label{eq.34}
\end{equation}
where
\[
\mathsf{D}[\rho_{0}]=\frac{2}{M}\mathsf{G}[\rho_0]+\frac{m-2}{M(m-1)}\|\rho_{0}\|_{m}^{m}.
\]
As a consequence, any global minimizer of $\mathsf{G}$ verifies
\begin{equation}\label{eq.19}
\frac{m}{m-1}\rho_{0}^{m-1}=\left(\mathcal{K}\ast\rho_{0}+\mathcal{D}[\rho_{0}]\right)_{+}.
\end{equation}
\end{theorem}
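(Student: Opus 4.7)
The plan is to derive \eqref{eq.33} and \eqref{eq.34} from the classical first-variation argument for constrained minimizers, and then read off the multiplier $\mathsf{D}[\rho_0]$ by testing against $\rho_0$ itself. Identity \eqref{eq.19} will follow from an elementary case analysis on and off $\mathrm{supp}(\rho_0)$.

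First I would set up admissible perturbations of the form $\psi=\eta_1-\eta_2$, where $\eta_1,\eta_2$ are nonnegative, bounded, and compactly supported with $\int\eta_1=\int\eta_2$, and $\eta_2$ is supported in $\{\rho_0\ge 2\delta\}$ for some $\delta>0$. For $\varepsilon>0$ small, $\rho_0+\varepsilon\psi\in L^1_+\cap L^m$ has mass $M$; the zero center-of-mass constraint can be restored by a translation, which leaves $\mathsf{G}$ invariant and therefore contributes no additional Lagrange multiplier. Minimality then yields
\[
0\le \frac{d}{d\varepsilon}\mathsf{G}[\rho_0+\varepsilon\psi]\Big|_{\varepsilon=0^+}=\int_{\R^2}\Phi\,\psi\,dx,\qquad \Phi:=\frac{m}{m-1}\rho_0^{m-1}-\mathcal{K}\ast\rho_0.
\]
Choosing both $\eta_1$ and $\eta_2$ inside $\{\rho_0\ge 2\delta\}$ produces a two-sided perturbation, so varying them freely under $\int\eta_1=\int\eta_2$ forces $\Phi$ to equal a.e.\ a constant $\mathsf{D}$ on $\{\rho_0\ge 2\delta\}$; letting $\delta\to 0$ gives \eqref{eq.33}. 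Taking instead $\eta_1$ supported outside $\mathrm{supp}(\rho_0)$ with $\eta_2$ as above yields $\int_{\R^2}(\Phi-\mathsf{D})\eta_1\,dx\ge 0$ for every such $\eta_1$, which is \eqref{eq.34}.

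To identify $\mathsf{D}=\mathsf{D}[\rho_0]$, I would multiply \eqref{eq.33} by $\rho_0$ and integrate. By symmetry of $\mathcal{K}$ one has $\int_{\R^2}(\mathcal{K}\ast\rho_0)\rho_0\,dx=-2\mathsf{W}[\rho_0]$; substituting $\mathsf{W}[\rho_0]=\mathsf{G}[\rho_0]-\frac{1}{m-1}\|\rho_0\|_m^m$ a short algebraic manipulation gives
\[
\mathsf{D}\,M=\frac{m}{m-1}\|\rho_0\|_m^m+2\mathsf{W}[\rho_0]=\frac{m-2}{m-1}\|\rho_0\|_m^m+2\mathsf{G}[\rho_0],
\]
which matches the claimed formula. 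Finally, \eqref{eq.19} is immediate: on $\{\rho_0>0\}$ the identity \eqref{eq.33} together with $\rho_0^{m-1}>0$ forces $\mathcal{K}\ast\rho_0+\mathsf{D}[\rho_0]\ge 0$, so the positive part acts as the identity; off the support $\rho_0^{m-1}=0$ while \eqref{eq.34} gives $\mathcal{K}\ast\rho_0+\mathsf{D}[\rho_0]\le 0$, so both sides vanish.

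The main technical obstacle is justifying the interchange of $d/d\varepsilon$ with the double integral defining $\mathsf{W}$, given the logarithmic singularity of $\mathcal{K}$. This is handled by dominated convergence using the local control
\[
\int_{\R^2}\int_{|x-y|\le 1}|\log|x-y||\,\rho_0(x)\rho_0(y)\,dx\,dy\le CM\|\rho_0\|_m
\]
already established at the start of Section 2, while the compact support of $\psi$ removes any issue at infinity. A secondary point of care is that $\mathrm{supp}(\rho_0)$ must be read in the measure-theoretic sense $\{\rho_0>0\}$ throughout the a.e.\ statements.
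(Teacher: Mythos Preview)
Your argument is correct and follows a genuinely different route from the paper's. The paper builds its perturbations multiplicatively as $\varphi=(\psi-\tfrac{1}{M}\int\psi\rho_0)\rho_0$ with $\psi\in C_0^\infty$ even; this guarantees $\mathrm{supp}(\varphi)\subset\mathrm{supp}(\rho_0)$, gives $\int\varphi=0$ automatically, and uses the radiality of $\rho_0$ (established in Theorem~\ref{varthm}) together with the evenness of $\psi$ to force $\int x\varphi=0$. The constant $\mathsf{D}[\rho_0]$ then drops out of the test-function structure without a separate computation. Your additive scheme $\psi=\eta_1-\eta_2$ with $\eta_2$ supported in a superlevel set $\{\rho_0\ge 2\delta\}$ is the more classical Lagrange-multiplier argument; it dispenses with the radiality of $\rho_0$ by exploiting the translation invariance of $\mathsf{G}$ to handle the center-of-mass constraint, and recovers $\mathsf{D}[\rho_0]$ a posteriori by testing \eqref{eq.33} against $\rho_0$. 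Both approaches ultimately perform the same computation $\mathsf{D}\,M=\tfrac{m}{m-1}\|\rho_0\|_m^m+2\mathsf{W}[\rho_0]$ for the multiplier; the paper simply bundles it into the definition of $\varphi$.

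One small technical point: the compact support of $\psi$ alone does not quite remove the issue at infinity for $\int(\mathcal{K}\ast\rho_0)\psi\,dx$; you still need $\int_{\R^2}\log^+|y|\,\rho_0(y)\,dy<\infty$ so that $\mathcal{K}\ast\rho_0$ is locally integrable on $\mathrm{supp}(\psi)$. This is precisely the condition $\rho_0\log(1+|x|^2)\in L^1(\R^2)$ established at the end of the proof of Theorem~\ref{varthm}, which the paper invokes explicitly when bounding $\mathsf{W}[\varphi]$.
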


\begin{proof}
The technical difficulty here is to make good variations of the
minimizer under the low available regularity conditions on
$\rho_0$ obtained from Theorem \ref{varthm}, namely $\rho_0 \in
L^{1}_+\cap L^{m}(\R^{2})$ and $\rho_0 \log \rho_0 \in
L^{1}(\R^{2})$. We use some ideas from \cite{Strohmer}. We first
show \eqref{eq.33}. Let $\rho_{0}$ be a radially decreasing
minimizer of $\mathsf{G}$. Taking any $\varepsilon>0$ and a test
function $\psi\in C_{0}^{\infty}(\R^{2})$ such that
$\psi(x)=\psi(-x)$, let us define the function
\[
\varphi(x)=\left(\psi(x)-\frac{1}{M}\int_{\R^{2}}\psi(x)\,\rho_{0}(x)\,dx\right)\rho_{0}(x).
\]
We point out that $\varphi\in L^{1}(\R^{2})\cap L^{m}(\R^{2})$, besides
\begin{equation}
\int_{\R^{2}}\varphi(x)\,dx=\int_{\R^{2}}x\varphi(x)\,dx=0,\label{eq.35}
\end{equation}
and $\text{supp}(\varphi)\subseteq\text{supp}(\rho_{0})=:E$.
Moreover, for
$\varepsilon<\varepsilon_{0}:=(2\|\psi\|_{\infty})^{-1}$ we find
\begin{equation*}
\rho_{0}+\varepsilon\varphi=\rho_{0}(x)\left(1+\varepsilon\left(\psi-\frac{1}{M}\int_{\R^{2}}\psi\,\rho_{0}\,dx\right)\right)
\geq\rho_{0}(x)(1-2\varepsilon\|\psi\|_{\infty})\nonumber\geq0.
\end{equation*}
Due to \eqref{eq.35}, we have  $\rho_{0}+\varepsilon\varphi\in
\mathcal{Y}_{M}$, thus we can calculate the first variation
$\frac{\delta \mathsf{G}}{\delta{\varphi}}(\rho_{0})$ of the
functional $\mathsf{G}$. Noting that
$\text{supp}(\rho_{0}+\varepsilon\varphi)\subseteq E$, we get
\begin{align}
\frac{\mathsf{G}[\rho_{0}+\varepsilon\varphi]-\mathsf{G}[\rho_{0}]}{\varepsilon}=\frac{1}{m-1}\int_{\mathring{E}}\frac{(\rho_{0}+\varepsilon\varphi)^{m}-\rho^{m}_{0}}{\varepsilon}dx
-\int_{\R^{2}}\mathcal{K}\ast\rho_{0}\varphi\,
dx+\varepsilon\mathsf{W}[\varphi]\label{eq.17}.
\end{align}
Using the first order Taylor expansion of
$(\rho_{0}+\varepsilon\varphi)^{m}$ at $\varepsilon=0$, we have
\[
\int_{\mathring{E}}\frac{(\rho_{0}+\varepsilon\varphi)^{m}-\rho^{m}_{0}}{\varepsilon}dx=m\int_{0}^{1}\mathcal{G}_{\varepsilon}(t)dt
\]
where
\[
\mathcal{G}_{\varepsilon}(t):=\int_{\mathring{E}}|\rho_{0}+\varepsilon t\varphi|^{m-2}(\rho_{0}+\varepsilon t\varphi)\,\varphi\,dx.
\]
By the definition of $\mathcal{G}_{\varepsilon}(t)$, it is obvious
that for all $t\in[0,1]$ and $\varepsilon<\varepsilon_{0}$, one
has
\[
|\mathcal{G}_{\varepsilon}(t)|\leq(\|\rho_{0}\|_{m}+\varepsilon_{0}\|\varphi\|_{m})^{m-1}\|\varphi\|_{m}\in
L^{1}_{t}(0,1)\,,
\]
then Lebesgue's dominated convergence yields
\begin{equation}
\int_{\mathring{E_{0}}}\frac{(\rho_{0}+\varepsilon\varphi)^{m}-\rho^{m}_{0}}{\varepsilon}dx\xrightarrow[]{\varepsilon\rightarrow 0}
m\int_{\R^{2}}\rho_{0}^{m-1}\varphi\,
dx.\label{eq.18}
\end{equation}
In addition, as $\rho_{0}(x)\log(1+|x|^{2})\in L^1(\R^2)$, the
algebraic inequality
$$
\log|x-y|\leq\frac{1}{2}(\log 2+\log(1+|x|^{2}))+\log(1+|y|^{2})
$$
and the estimate $|\varphi(x)|\leq2 \|\psi\|_{\infty}\rho_{0}(x)$
give $\mathsf{W}[\varphi]\leq C$. Therefore using this last
property and \eqref{eq.18} to pass to the limit in \eqref{eq.17}
as $\varepsilon\rightarrow0$, we obtain that
$$
\int_{\R^{2}}\varphi\mathcal{F}(\rho_{0})dx \geq 0\,,
$$
since
$\mathsf{G}[\rho_{0}+\varepsilon\varphi]\geq\mathsf{G}[\rho_{0}]$, where
\[
\mathcal{F}(\rho_{0}):=\frac{m}{m-1}\rho_{0}^{m-1}-\mathcal{K}\ast\rho_{0}=\frac{m}{m-1}\rho_{0}^{m-1}+\frac{1}{2\pi}\log|x|\ast\rho_{0}.
\]
Taking $-\psi$ instead of $\psi$, we finally obtain
\begin{equation}\label{first variation}
\int_{\R^{2}}\varphi\mathcal{F}(\rho_{0})dx = 0.
\end{equation}
By the definition of $\varphi$, we conclude that
\[
\int_{\R^{2}}\left[\mathcal{F}(\rho_{0})-\mathsf{D}[\rho_0]\,\right]\rho_{0}\,\psi\,dx=0\,,
\]
for all even functions $\psi\in C_{0}^{\infty}(\R^{2})$. Hence, we
deduce
\begin{equation}
\frac{m}{m-1}\rho_{0}^{m-1}-\mathcal{K}\ast\rho_{0}=\mathsf{D}[\rho_0]\quad\text{a.e. in }
\left\{\rho_{0}>0\right\}. \label{eq.11}
\end{equation}
Now, we turn to the proof of \eqref{eq.34}. Let us take an even
function $\psi\in C_{0}^{\infty}(\R^{2})$ with $\psi\geq 0$ such
that $\psi(x)\in [0,1]$, let us define the function
\[
\varphi=\psi-\frac{\rho_{0}}{M}\int_{\R^{2}}\psi\,dx.
\]
Then $\varphi\in L^{1}(\R^2)\cap L^{m}(\R^2)$ and
\[
\int_{\R^{2}}\varphi(x)\,dx=\int_{\R^{2}}x\varphi(x)\,dx=0.
\]
In addition, denoting by $|\cdot|_{N}$ the $N-$ dimensional Lebesgue measure, we have
\begin{equation*}
\rho_{0}+\varepsilon\varphi\geq\rho_{0}\left(1-\frac{\varepsilon}{M}\int_{\R^{2}}\psi dx\right)\geq\left(1-\frac{\varepsilon}{M}|\text{supp
}\psi|_{N}\right)\rho_{0}(x),
\end{equation*}
then $\rho_{0}+\varepsilon\varphi\geq0$ for small $\varepsilon$ in
$\mbox{supp}(\rho_0)$ and outside since $\psi\geq 0$, hence
$\rho_{0}+\varepsilon\varphi\in\mathcal{Y}_{M}$. Arguing as
before, we obtain from \eqref{first variation}
\[
\int_{\R^{2}}\left[\mathcal{F}(\rho_{0})-\mathsf{D}[\rho_0]\right]\psi\, dx\geq0
\]
for all the functions $\psi$ chosen as above, implying
\[
\frac{m}{m-1}\rho_{0}^{m-1}\geq\mathcal{K}\ast\rho_{0}+\mathsf{D}[\rho_0]
\quad a.e. \mbox{ outside supp}(\rho_0).
\]
\end{proof}

\begin{remark}
Let us point out that inequality \eqref{eq.34} is a consequence of the
positivity and mass constraints on the class of possible minimizers, i.e,
due to the fact that we are working with a optimization problem
with convex constraints.
\end{remark}

Actually, we can show many properties about the regularity of
global radial minimizers to the free energy functional
$\mathsf{G}$. Now, we give some information concerning the
\emph{asymptotic behavior} at infinity of the \emph{logarithmic
potential} of any density $\rho_{0}\in\mathcal{Y}_{M}$, namely the
Newtonian potential
\begin{equation}\label{eq.12}
u(x):=(\mathcal{K}\ast\rho_{0})(x)=-\frac{1}{2\pi}\int_{\R^{2}}\log|x-y|\,\rho_{0}(y)dy.
\end{equation}
The proof of the following result is contained in \cite[Lemma
1.1]{CT}. Let $\rho\in\mathcal{Y}_{M}$. Then we have
\begin{equation}\label{asymptbehth1}
\lim_{|x|\to\infty}\frac{u(x)}{\mathcal{K}(x)}= M\,.
\end{equation}
With this further result in hand, we are now ready to give more
information about the regularity of the radially decreasing
minimizers of $\mathsf{G}$.

\begin{theorem}\label{regulth1}
All radially decreasing global minimizers of $\mathsf{G}$ in
$\mathcal{Y}_{M}$ are compactly supported continuous functions in
$\R^2$ and smooth inside their support.
\end{theorem}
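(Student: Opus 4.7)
The plan is to combine the Euler--Lagrange identity \eqref{eq.19}, the asymptotic behavior \eqref{asymptbehth1} of the logarithmic potential $u(x):=(\mathcal{K}\ast\rho_0)(x)$, and standard elliptic regularity for the Poisson equation $-\Delta u=\rho_0$ in $\R^2$ (recall that $\mathcal{K}$ is the fundamental solution of $-\Delta$ in two dimensions). I would deduce the three conclusions---compact support, continuity, and interior smoothness---in this order.

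For compact support I would argue as follows: by \eqref{asymptbehth1}, $u(x)/\mathcal{K}(x)\to M>0$ as $|x|\to\infty$, and since $\mathcal{K}(x)=-\tfrac{1}{2\pi}\log|x|\to-\infty$, one has $u(x)+\mathsf{D}[\rho_0]\to-\infty$. Hence there exists $R>0$ with $u+\mathsf{D}[\rho_0]<0$ on $\R^2\setminus\overline{B_R(0)}$, and \eqref{eq.19} forces $\rho_0\equiv 0$ there.

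For continuity of $\rho_0$ on $\R^2$ it suffices, by \eqref{eq.19} and the continuity of $t\mapsto t^{1/(m-1)}$ on $[0,\infty)$, to establish the continuity of $u$. The latter follows from H\"{o}lder inequality arguments analogous to those preceding \eqref{eq.3}: for $x_n\to x_0$ I would split $\int \log|x_n-y|\rho_0(y)\,dy$ into a neighborhood of the singularity (controlled via $\rho_0\in L^m$ and $\log\in L^{m'}_{\mathrm{loc}}$, then passed to the limit by dominated convergence) and its complement (controlled via the integrability $\rho_0\log(1+|x|^2)\in L^1$ granted by Theorem \ref{varthm}).

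For smoothness inside $\Omega_0:=\{\rho_0>0\}$, I would rewrite \eqref{eq.19} as $\rho_0=c\,(u+\mathsf{D}[\rho_0])^{1/(m-1)}$ with $c>0$, so that $u$ solves the semilinear elliptic equation
\[
-\Delta u = c\,(u+\mathsf{D}[\rho_0])^{1/(m-1)}\quad\text{in }\Omega_0,
\]
with continuous, strictly positive right-hand side. Since $t\mapsto t^{1/(m-1)}$ is smooth on $(0,\infty)$, a classical bootstrap---$L^p$ regularity plus Sobolev embedding to reach $C^{1,\alpha}_{\mathrm{loc}}$, then iterated Schauder estimates---upgrades $u$ to $C^\infty(\Omega_0)$, and hence $\rho_0\in C^\infty(\Omega_0)$. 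The hardest step I anticipate is the continuity of $u$ under the minimal regularity available on $\rho_0$, which requires exploiting both the $L^m$ bound and the weighted integrability of Theorem \ref{varthm}; once this is in hand, compact support and interior smoothness follow via routine arguments.
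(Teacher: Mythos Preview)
Your proposal is correct and shares the paper's core strategy: invoke the Euler--Lagrange identity \eqref{eq.19} together with the asymptotic behavior \eqref{asymptbehth1} of the logarithmic potential, and then read off regularity of $\rho_0$ from regularity of $u$. Your compact-support argument coincides exactly with the first of the two methods the paper gives (the paper also offers an ODE alternative based on oscillation of Emden--Fowler solutions, which you do not need).

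The differences lie in the regularity steps. For continuity of $u$, the paper proceeds via Calder\'on--Zygmund and Morrey: $\rho_0\in L^m$ gives $u\in W^{2,m}_{loc}\hookrightarrow L^\infty_{loc}$, hence $\rho_0\in L^\infty$ by \eqref{eq.19} and radial monotonicity, and then $u\in C^1(\R^2)$ by classical potential theory. Your direct Hölder-splitting argument is perfectly fine and, once compact support is already in hand, is in fact simpler (the weighted integrability $\rho_0\log(1+|x|^2)\in L^1$ is then unnecessary). For interior smoothness, the paper exploits radial symmetry explicitly, writing $u'(r)=-M_{\rho_0}(r)/(2\pi r)$ and iterating differentiability of the mass function; your semilinear bootstrap on $-\Delta u=c(u+\mathsf D[\rho_0])^{1/(m-1)}$ is the standard PDE route and works equally well because the nonlinearity is $C^\infty$ on $(0,\infty)$. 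The paper's ODE approach buys a more hands-on radial picture (and connects to the Emden--Fowler analysis), while your bootstrap is more portable and does not rely on symmetry.
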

\begin{proof}
Let $\rho_0$ be a radially decreasing minimizer of $\mathsf{G}$.
Then there is a ball $B_{R}(0)$ such that
$\left\{\rho_{0}>0\right\}=B_{R}(0)$. Let us consider the
logarithmic potential of $\rho_{0}$, namely the function $u$
defined in \eqref{eq.12}. As $\rho_{0}\in L^{m}(\R^2)$, by
\cite[Lemma 9.9]{Gilbarg} we have $u\in W_{loc}^{2,m}(\R^{2})$. By
Morrey's Theorem ($m>1$), it follows that $u\in
L_{loc}^{\infty}(\R^{2})$, and by equation \eqref{eq.11} we get
\begin{equation}
\frac{m}{m-1}\rho_{0}^{m-1}=u+C \text{ a.e. in } B_{R}(0).\label{stat}
\end{equation}
Thus from the monotonicity of $\rho_0$ we deduce $\rho_{0}\in
L^{\infty}(\R^{2})$. Hence \cite[ Lemma 4.1]{Gilbarg} implies
$u\in C^{1}(\R^{2})$. Now, for all $r>0$ we define the \emph{mass
function of} $\rho_0$
\[
M_{\rho_{0}}(r)=\int_{B_{r}(0)}\rho_{0}(x)\,dx.
\]
Take any $R_{1}<R$ and consider the following boundary value problem
\begin{equation}
\left\{
\begin{array}
[c]{l}%
-\Delta v=\rho_{0}\quad\text{ in } B_{R_{1}}(0)
\\
\\
v(x)=u(R_{1})\quad\text{ on }\partial B_{R_{1}}(0)
\end{array}
\right.\label{eq.13}
\end{equation}
The logarithmic potential \eqref{eq.12} solves problem
\eqref{eq.13}, whence $u=v$ on $\overline{B_{R_{1}}(0)}$. On the
other hand, the solution of \eqref{eq.13} can be written as in
\cite{Talenti,Bennett}: if $r=|x|\in(0,R_{1})$,
\[
u(r)-u(R_1)=v(r)-v(R_1)=\frac{1}{4\pi}\int_{\pi r^{2}}^{\pi R_{1}^{2}}\frac1s\int_{0}^{s}\rho_{0}^{\ast}(\sigma)d\sigma\,ds
\]
where $\rho_{0}^{\ast}$ is the \emph{one dimensional decreasing
rearrangement of} $\rho_{0}$. Differentiating we get
\begin{equation*}
u^{\prime}(r)=-\frac{1}{2\pi r}\int_{0}^{\pi r^{2}}\rho_{0}^{\ast}(\sigma)d\sigma=-\frac{1}{2\pi r}\int_{B_{0}(r)}\rho_{0}(x)\,dx=-\frac{M_{\rho_{0}}(r)}{2\pi
r},
\end{equation*}
that is
\begin{equation}
\frac{d}{dr}(\rho_{0}\ast \mathcal{K})(r)=-\frac{M_{\rho_{0}}(r)}{2\pi r}\quad r>0.
\label{eq.14st}
\end{equation}
By identity \eqref{eq.14st} it follows that $\rho_{0}$ is
\emph{smooth} inside its support. Indeed, following some arguments
of \cite{Kim}, first we observe that the function
\[
f(r):=-\frac{M_{\rho_{0}}(r)}{2\pi r}
\]
is continuous for $r>0$ and $f(r)\rightarrow0$ as $r\rightarrow0$: indeed, we have
\[
\lim_{r\rightarrow0}f(r)=-\lim_{r\rightarrow0}\frac{1}{r}\int_{0}^{r}t\,\rho_{0}(t)\,dt=-\lim_{r\rightarrow0}r\,\rho_{0}(r)=0.
\]
Thus $u=\mathcal{K}\ast\rho_{0}$ is differentiable everywhere in
the positive set $\left\{\rho_{0}>0\right\}=B_{R}(0)$ of
$\rho_{0}$. This property and \eqref{stat} imply that $\rho_{0}$
is differentiable in $B_{R}(0)$, so $f(r)$ is twice
differentiable. Then we can repeat this argument and conclude.

Let us prove that $\rho_{0}$ has \emph{compact support}. There are
two different ways to prove this property: one is based on the
asymptotic behavior of the log-potential, the other relies on a
pure ODE approach relating our global minimizers to nonlinear
elliptic equations. We show both methods since they give
complementary information. Concerning the first one, we simply use
\eqref{asymptbehth1} to infer that $u(x)\sim
M\mathcal{K}(x)\rightarrow-\infty$ as $|x|\rightarrow\infty$,
hence \emph{if} equation \eqref{stat} were satisfied for all $x$,
for a sufficiently large $R$ we would have $\rho_{0}<0$ for all
$|x|>R$, which is a contradiction. Then $\rho_{0}$ must have
compact support.

The other argument to prove that $\rho_{0}$ is compactly supported
is the following. By contradiction, let us suppose that
$\text{supp}(\rho_{0})=\R^{2}$. Then \eqref{eq.11} implies that
the function $\theta:=\rho_{0}^{m-1}\in L^{\frac{m}{m-1}}$ solves
the problem
\begin{equation}
\left\{
\begin{array}
[c]{l}%
-\Delta\theta=\dfrac{m-1}{m}\,\theta^{1/(m-1)}\quad\text{ in } \R^{2}
\\
\\
\theta\rightarrow0\quad\text{ as }|x|\rightarrow\infty.
\end{array}
\right.\label{eq.14}
\end{equation}
Since $\theta$ is radial, the first equation in \eqref{eq.14} (which is an \emph{Emden Fowler-type} equation) can be rewritten as
\begin{equation*}
-(r\,\theta^{\prime})^{\prime}=\frac{m-1}{m}\,r\,\theta^{1/(m-1)}\quad,r>0
\end{equation*}
and with the change of variables $r=e^{t}\,,w(t)=\theta(e^{t})$,
the same equation reads
\begin{equation}
w^{\prime\prime}(t)+\frac{m-1}{m}e^{2t}w(t)^{\frac{1}{m-1}}=0.\label{eq.15}
\end{equation}
Now we can invoke \cite[Corollary 1.2]{ZUZ}: since for all $a>0 $
we have
\[
\int_{a}^{\infty}e^{2t}\,dt=\int_{a}^{\infty}t^{\frac{1}{m-1}}\,e^{2t}\,dt=+\infty\,.
\]
We obtain that in both cases $m<2$, $m>2$ \emph{all} the proper
solution to \eqref{eq.15} are \emph{oscillatory}, namely they have
a sequence of zeros tending to $+\infty$. But this contradicts the
fact that $\theta$ is everywhere positive. The case $m=2$ is even
simpler. Indeed, in this case $\theta$ satisfies the linear
problem (recall that $\rho_{0}$ is smooth)
\[
-(r\,\theta^{\prime})^{\prime}=\frac{r}{2}\,\theta\quad,r>0
\]
and the condition $\theta\rightarrow0$ as $r\rightarrow\infty$ obliges $\theta$ to have the form
\[
\theta(r)=C\,J_{0}\left(\frac{r}{\sqrt{2}}\right)
\]
which is clearly oscillating, leading to contradiction. Therefore,
the support of $\rho_{0}$ must be compact. Finally, being the
Newtonian potential smooth together with \eqref{eq.19} implies
that the density $\rho_0$ is H\"older continuous in $\R^2$ with
exponent $1/(m-1)$.
\end{proof}

\begin{remark}
By equation \eqref{eq.19} and arguing as in {\rm\cite{CarBlanLau}}, we
have that $\theta:=\rho_{0}^{m-1}$ is the unique \emph{classical
solution} in $B(0,R)$, with zero boundary condition, to the
elliptic equation
\[
-\Delta\theta=\frac{m-1}{m}\,\theta^{1/(m-1)}.
\]
Therefore, we can write $\theta$ in terms of a scaling of the
solution $\zeta$ to the same problem in the unit ball, namely
\[
\rho_{0}(x)=R^{2(m-1)/(m-2)}\,\zeta\left(\frac{x}{R}\right).
\]
\end{remark}

With the above regularity of global minimizers, it is easy to show
that the distributional gradient in $\R^2$ of $\rho^m_0$ satisfies
$\nabla \rho^m_0 = \tfrac{m}{m-1}\rho_0
\nabla\rho_0^{m-1}=m\rho_0^{m-1}\left[\nabla \rho_0\right]_+$ with
the last gradient being the classical gradient in its support. As
a consequence,
\begin{equation}
\nabla\rho_0^m- \rho_0\nabla (\mathcal{K}\ast
\rho_0)=\rho_0\left[\nabla(\tfrac{m}{m-1}\rho_0^{m-1}-
\mathcal{K}\ast \rho_0)\right]_+=0\label{steadystdef}
\end{equation} in the sense of distributions.
We have deduced the following result:

\begin{corollary}\label{stationary}
Global minimizers of the free energy functional $\mathsf{G}$ are
stationary solutions of the two dimensional subcritical
Keller-Segel model \eqref{KellerS} in the distributional sense \eqref{steadystdef}.
\end{corollary}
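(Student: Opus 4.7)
The plan is to deduce the corollary as a straightforward consequence of the Euler--Lagrange identity \eqref{eq.19} combined with the regularity information from Theorem~\ref{regulth1}. First I would recall that a radially decreasing global minimizer $\rho_0$ exists and must (after a translation) be radial by Theorem~\ref{varthm}; by Theorem~\ref{regulth1} its support is a closed ball $\overline{B_R(0)}$, the function $\rho_0$ is continuous on $\R^2$, smooth in $B_R(0)$, and vanishes on $\partial B_R(0)$. Since $m>1$, the composition $\rho_0^{m-1}$ is then continuous on $\R^2$, smooth inside $B_R(0)$, and identically zero outside.

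Next I would establish the chain-rule identity $\nabla\rho_0^m = \tfrac{m}{m-1}\rho_0\,\nabla\rho_0^{m-1} = m\,\rho_0^{m-1}[\nabla\rho_0]_+$ in the sense of distributions on $\R^2$. Inside $B_R(0)$ this is just the classical chain rule, and outside $\overline{B_R(0)}$ both sides vanish. The only potential trouble is a singular contribution supported on $\partial B_R(0)$, but continuity of $\rho_0^m$ across $\partial B_R(0)$ together with the boundary condition $\rho_0=0$ on $\partial B_R(0)$ rules this out: testing against $\varphi\in C_0^\infty(\R^2)$ and applying the divergence theorem in $B_R(0)$ and in $\R^2\setminus\overline{B_R(0)}$ produces boundary terms proportional to $\rho_0^m|_{\partial B_R(0)}=0$.

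Third, I would differentiate \eqref{eq.19} on the open set $\{\rho_0>0\}=B_R(0)$, where the positive-part truncation is inactive and $\rho_0$ is smooth, to obtain
\begin{equation*}
\nabla\Bigl(\tfrac{m}{m-1}\rho_0^{m-1}\Bigr)=\nabla(\mathcal{K}\ast\rho_0)\qquad\text{pointwise in }B_R(0).
\end{equation*}
Multiplying by $\rho_0$ and using that the right-hand side is continuous on $\R^2$ (indeed $\mathcal{K}\ast\rho_0\in C^1(\R^2)$, as recorded in the proof of Theorem~\ref{regulth1}) gives
\begin{equation*}
\rho_0\,\nabla\bigl(\tfrac{m}{m-1}\rho_0^{m-1}-\mathcal{K}\ast\rho_0\bigr)=0\quad\text{in }\R^2,
\end{equation*}
since the factor $\rho_0$ annihilates the difference outside its support. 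Combining this with the distributional chain rule from the previous step yields exactly \eqref{steadystdef}, which is the claimed stationary equation.

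No step is a genuine obstacle here; the only mildly delicate point is ruling out a distributional boundary contribution on $\partial B_R(0)$ when writing $\nabla\rho_0^m$ via the chain rule, and that is taken care of by the Hölder continuity (exponent $1/(m-1)$) of $\rho_0$ on $\R^2$ together with $\rho_0|_{\partial B_R(0)}=0$. Everything else is just bookkeeping between pointwise identities inside the support and distributional identities on $\R^2$.
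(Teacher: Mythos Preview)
Your argument is correct and follows essentially the same route as the paper: the paper also derives the corollary directly from the Euler--Lagrange relation together with the regularity of Theorem~\ref{regulth1}, first noting the distributional chain rule $\nabla\rho_0^m=\tfrac{m}{m-1}\rho_0\nabla\rho_0^{m-1}$ and then using \eqref{eq.33} (equivalently \eqref{eq.19} on the support) to conclude \eqref{steadystdef}. Your treatment is in fact slightly more careful than the paper's in that you explicitly rule out a boundary contribution on $\partial B_R(0)$, whereas the paper simply asserts the chain rule as ``easy to show'' from the available regularity.
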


Now, let us show the uniqueness of stationary states among the set of radially decreasing compactly supported  smooth inside their support solutions. As a
consequence, we conclude the uniqueness of global minimizers taking into account Corollary \ref{stationary} and Theorems \ref{varthm} and \ref{regulth1}. With
this aim, we briefly recall some of the main results contained in \cite{Kim}. We firstly start with the definition of mass concentration:
\begin{definition}
Let $\rho_{1},\rho_{2}\in L^{1}_{loc}(\R^{N})$, $N\geq1$, be two radially symmetric functions on $\R^{N}$. We say that $\rho_1$ is less concentrated than
$\rho_2$, and we write
$\rho_1\prec
\rho_2$ if for
all $r>0$ we get
\[
\int_{B_{r}(0)}\rho_1(x)dx\leq \int_{B_{r}(0)}\rho_2(x)dx.
\]
\end{definition}
The partial order relationship $\prec$ is called \emph{comparison of mass concentrations}.
Of course, this definition can be suitably adapted if $\rho_1,\rho_2$ are radially symmetric and locally integrable functions on a ball $B_{R}$. Besides, if
$\rho_1$
and $\rho_2$ are locally integrable on a general open set $\Omega$, we say that $\rho_1$ is less concentrated than $\rho_2$ and we write again $\rho_1\prec
\rho_2$ simply if
$\rho_1^{\#}\prec \rho_2^{\#}$.\smallskip\newline
If $\rho(x,t)$ is a locally integrable function on $\R^N$ for all times $t\geq0$, we define the time dependant \emph{mass function} of $\rho$ as
\begin{equation}
M_{\rho}(r,t)=\int_{B_r(0)}\rho(x,t)dx.\label{massfunct}
\end{equation}
If $\rho(x,t)$ is the solution to the evolution problem \eqref{KellerS}
where the initial data $\rho(x,0)$ is a continuous, compactly supported, radially decreasing function, then it is easy to check, see \cite{Kim}, that the mass
function $M_{\rho}(r,t)$ satisfies, in the support $\left\{x:|x|<R(t)\right\}$ of $\rho(\cdot,t)$ the PDE
\begin{equation}
\frac{\partial M_{\rho}}{\partial t}(r,t)=2\pi r\,\frac{\partial}{\partial r}\left(\left(\frac{1}{2\pi r}\frac{\partial M_{\rho}}{\partial r}\right)^{m}\right)+
\left(\frac{1}{2\pi r}\frac{\partial M_{\rho}}{\partial r}\right)M_\rho.\label{eqmass}
\end{equation}
Let us take a function $\rho(x,t)$ being $C^{1}$ in its positive set and $\rho(\cdot,t)\in L^{1}(\R^2)\cap L^{\infty}(\R^2)$ for each $t\geq 0$. We will say
that
$\rho$ is a subsolution (resp. a supersolution) to \eqref{eqmass} if the sign $\leq$ (respectively the sign $\geq$) replaces the equal sign in \eqref{eqmass}.

In \cite{Kim} the following result concerning the mass comparison is proved, which is readily seen to hold also in dimension $N=2$:
\begin{proposition}\label{preservingconc}
Assume that $\rho_1,\,\rho_2$ are respectively a subsolution and a supersolution to equation \eqref{eqmass}. Suppose that $\rho_1$, $\rho_2$ preserve the mass
through time, i.e.
\[
\int_{\R^2}\rho_1(x,t)dx=\int_{\R^2}\rho_1(x,0)dx\quad \int_{\R^2}\rho_2(x,t)dx=\int_{\R^2}\rho_2(x,0)dx
\]
and that $\rho_1$ is less concentrated than $\rho_2$ at the initial time, namely
\[
\rho_1(x,0)\prec \rho_2(x,0).
\]
Then the mass functions preserve the same order for all times:
\[
\rho_1(x,t)\prec\rho_2(x,t)\,\,\text{for all } t\geq0.
\]
\end{proposition}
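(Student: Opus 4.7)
The plan is to deduce the mass comparison $\rho_1(\cdot,t)\prec\rho_2(\cdot,t)$ from a parabolic maximum principle applied to the difference of the mass functions. Set $\phi(r,t):=M_{\rho_1}(r,t)-M_{\rho_2}(r,t)$; the desired conclusion is equivalent to $\phi\le 0$ for all $r\ge 0$ and $t\ge 0$. I would first collect the boundary behaviour of $\phi$: at $r=0$, $\phi(0,t)=0$; as $r\to\infty$, $\phi(r,t)\to \|\rho_1(\cdot,0)\|_1-\|\rho_2(\cdot,0)\|_1\le 0$ by mass preservation and the initial comparison at $r=\infty$; and at $t=0$, $\phi(r,0)\le 0$ is the standing hypothesis.

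To rule out maxima at the boundary and to compensate for the lower-order drift term, introduce the regularised function $\Phi_{\varepsilon}(r,t):=e^{-Ct}\phi(r,t)-\varepsilon(1+t)$ with constants $C,\varepsilon>0$ to be chosen. On the parabolic boundary and at infinity $\Phi_{\varepsilon}$ is strictly negative, so if it were positive anywhere it would attain a strict interior maximum at some $(r_0,t_0)$ with $t_0>0$ and $r_0\in(0,\infty)$. At such a point I would exploit the identity $\partial_r M_{\rho_i}=2\pi r\rho_i$: the first-order condition $\partial_r\phi(r_0,t_0)=0$ forces $\rho_1(r_0,t_0)=\rho_2(r_0,t_0)=:\rho_\star$, and the second-order condition $\partial_r^2\phi(r_0,t_0)\le 0$ controls the nonlinear diffusion term. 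Subtracting the sub- and supersolution inequalities corresponding to \eqref{eqmass} at $(r_0,t_0)$, and using $\partial_r(\rho_i^m)=m\rho_i^{m-1}\partial_r\rho_i$, the porous-medium contribution reduces to $m\rho_\star^{m-1}\partial_r^2\phi\le 0$, while the drift contribution collapses to $\rho_\star\phi(r_0,t_0)$. Hence $\partial_t\phi\le \rho_\star\phi$ at the maximum.

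It then follows that
\[
\partial_t\Phi_{\varepsilon}(r_0,t_0)=e^{-Ct_0}\bigl(\partial_t\phi-C\phi\bigr)-\varepsilon\le e^{-Ct_0}(\rho_\star-C)\phi(r_0,t_0)-\varepsilon,
\]
and choosing $C>\max\{\|\rho_1\|_{L^\infty},\|\rho_2\|_{L^\infty}\}$, which is legitimate thanks to the $L^\infty$ bounds in the hypothesis, renders $(\rho_\star-C)\phi\le 0$ at the maximum, so that $\partial_t\Phi_{\varepsilon}(r_0,t_0)\le -\varepsilon<0$. This contradicts the first-order condition $\partial_t\Phi_{\varepsilon}(r_0,t_0)\ge 0$, forcing $\Phi_{\varepsilon}\le 0$ everywhere. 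Letting $\varepsilon\to 0^+$ gives $\phi\le 0$ and the claim.

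The step I expect to be the main obstacle is the degenerate/singular character of the porous-medium diffusion when $\rho_\star=0$: the coefficient $m\rho_\star^{m-1}$ vanishes (for $m>1$), and \eqref{eqmass} only holds classically inside the positivity sets of $\rho_1,\rho_2$, so the elementary maximum-principle argument above needs some care at points where either density vanishes. Fortunately, at a maximum with $\rho_\star=0$ the identity $\partial_r M_{\rho_1}=\partial_r M_{\rho_2}=0$ combines with the differential inequalities to give $\partial_t\phi\le 0$ with no diffusion invoked, which still suffices. A fully rigorous treatment would require a prior regularisation (e.g.\ replacing $\rho^m$ by $(\rho+\delta)^m-\delta^m$ and approximating the initial data by smooth strictly positive functions), proving the comparison for the regularised problem and then passing to the limit $\delta\to 0^+$ via stability estimates for \eqref{KellerS}; this is the route carried out in \cite{Kim}, and the fact that the dimension is $2$ rather than $\ge 3$ plays no essential role since the radial mass equation \eqref{eqmass} retains the same structure.
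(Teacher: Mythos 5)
The paper does not actually prove this proposition: it is imported from \cite{Kim} with the remark that the argument carries over verbatim to dimension two, and your maximum-principle scheme for $\phi=M_{\rho_1}-M_{\rho_2}$ --- the first-order condition $\partial_r\phi=0$ forcing $\rho_1=\rho_2=\rho_\star$ at a touching point, the second-order condition reducing the diffusion terms to $m\rho_\star^{m-1}\partial_r^2\phi\le 0$, the exponential weight $e^{-Ct}$ with $C>\|\rho_i\|_\infty$ absorbing the drift $\rho_\star\phi$, and the $\varepsilon$-perturbation plus regularisation of the degenerate diffusion --- is precisely the proof given there. Your computation is correct, and the two delicate points you flag (attainment of the maximum on the unbounded strip, and points where $\rho_\star=0$ so that \eqref{eqmass} is not available classically) are exactly the ones handled in \cite{Kim} by the approximation you describe, so this is essentially the same proof the paper relies on.
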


It is also possible to show a two dimensional version of \cite[Theorem 5.6]{Kim}, showing an exponential convergence of the mass function of the solution to $\eqref{KellerS}$ with a
generic radial initial data to the mass function of a steady state having the same mass. Notice that the existence of a radially decreasing steady state with given
mass $M$ is guaranteed by Corollary \ref{stationary} and Theorems \ref{varthm} and \ref{regulth1}.

\begin{theorem}[Exponential convergence of the mass function]\label{uniquenessteady}
Let $\rho(x,t)$ be the solution to equation \eqref{KellerS} with initial data $\rho(x,0)\geq0$ being a continuous, radially decreasing, compactly supported
function on $\R^2$. Let $\rho_0$ be a radially decreasing steady state to \eqref{KellerS} in the distributional sense \eqref{steadystdef} with mass $M$, being $M=\|\rho(x,0)\|_{1}$.  Then
\begin{equation}
\left|M_\rho(r,t)-M_{\rho_0}(r)\right|\leq C e^{-\lambda t}\,\,\,\text{for all }r\geq0\label{asymptconv}
\end{equation}
where $C$ depends on $\rho(x,0)$, $M$, $m$, and the rate $\lambda$ only depends on $M$.
\end{theorem}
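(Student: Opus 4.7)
The plan is to adapt the argument of \cite[Theorem 5.6]{Kim} to the two dimensional logarithmic kernel setting. The engine of the proof is the mass concentration comparison in Proposition \ref{preservingconc}, which reduces exponential convergence of mass functions to sandwiching the solution between two mass-preserving barriers that relax to $\rho_0$ at exponential rate. First I would recast \eqref{eqmass} in the volume variable $v=\pi r^{2}$: since $\partial_{r}M/(2\pi r)=\partial_{v}M$, the mass equation becomes an autonomous degenerate parabolic PDE
\begin{equation*}
M_{t}=4\pi v\,\partial_{v}\bigl[(\partial_{v}M)^{m}\bigr]+(\partial_{v}M)\,M
\end{equation*}
with boundary conditions $M(0,t)=0$ and $M(\infty,t)=M$ enforced by mass conservation. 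The steady profile $N(v):=M_{\rho_{0}}(\sqrt{v/\pi})$ solves the corresponding stationary equation and, by Theorems \ref{varthm} and \ref{regulth1} together with Corollary \ref{stationary}, is the unique radial steady state of mass $M$.

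Next, for small $\eta>0$ I would construct two radial mass-$M$ profiles $\overline{\rho}(x,t),\underline{\rho}(x,t)$ such that (i) $\underline{\rho}(\cdot,0)\prec\rho(\cdot,0)\prec\overline{\rho}(\cdot,0)$; (ii) $\overline{\rho}$ is a supersolution and $\underline{\rho}$ a subsolution of \eqref{eqmass} in the sense defined before Proposition \ref{preservingconc}; and (iii) $\sup_{r\geq0}|M_{\overline{\rho}}(r,t)-M_{\rho_{0}}(r)|+\sup_{r\geq0}|M_{\underline{\rho}}(r,t)-M_{\rho_{0}}(r)|\leq C\eta\,e^{-\lambda t}$. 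The barriers are obtained as perturbations $M_{\rho_{0}}(r)\pm\eta\,e^{-\lambda t}\varphi(r)$ with $\varphi$ a fixed, compactly supported and sign-constrained radial profile vanishing at $r=0$ and at infinity, chosen so that after linearisation around $N$ the decay rate $\lambda$ is the spectral gap of the associated linearized operator. Since $N$ depends only on $M$ (uniqueness of the radial steady state), the rate $\lambda$ depends only on $M$, while the prefactor $C$ is set by the initial concentration gap, which absorbs the dependence on the initial datum $\rho(\cdot,0)$.

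Once the barriers are available, Proposition \ref{preservingconc} propagates the initial mass-concentration sandwich to all later times,
\begin{equation*}
M_{\underline{\rho}}(r,t)\leq M_{\rho}(r,t)\leq M_{\overline{\rho}}(r,t)\qquad\forall\,r\geq0,\ t\geq0,
\end{equation*}
and combining with property (iii) through the triangle inequality yields \eqref{asymptconv}. The initial sandwich (i) is not an obstacle: since $\rho(\cdot,0)$ is continuous, radially decreasing and compactly supported with the same mass $M$ as $\rho_{0}$, a finite $\eta$ always suffices, as can be checked by comparing mass functions on a common bounded interval and using that both functions saturate at $M$ at infinity.

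The main technical obstacle I expect is step two, namely the rigorous construction of the barriers. In \cite{Kim} the homogeneous (Newtonian) kernel offers a clean scaling invariance that permits a pure dilation ansatz of the form $\alpha(t)^{2}\rho_{0}(\alpha(t)x)$; in the logarithmic case this ansatz fails to produce an exact sub/super solution because the integral of $\log|x-y|$ picks up a mass-dependent logarithmic correction under dilation that is not absorbed by a single ODE for $\alpha(t)$. The additive eigenfunction perturbation circumvents this, but forces one to control the quadratic error of the nonlinear terms in the mass equation by the exponential smallness $\eta\,e^{-\lambda t}$ of the perturbation, which requires quantitative estimates on $N$ near $v=0$ and near the edge of the support (where $\partial_{v}M$ degenerates). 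The regularity inside the support and the H\"older continuity at the boundary established in Theorem \ref{regulth1} are exactly what is needed to make these estimates effective.
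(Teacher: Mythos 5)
Your overall architecture -- sandwiching $M_\rho$ between a subsolution and a supersolution of \eqref{eqmass} via Proposition \ref{preservingconc} and making both barriers relax exponentially to $M_{\rho_0}$ -- is exactly the paper's (and Kim--Yao's) strategy. The gap is in the barrier construction. You discard the dilation ansatz $k(t)^2\rho_0(k(t)x)$ on the grounds that the logarithmic kernel ``picks up a mass-dependent logarithmic correction under dilation that is not absorbed by a single ODE,'' but this objection does not apply: the dilation of $\mathcal{K}\ast\rho$ changes the \emph{potential} only by the additive constant $-\tfrac{M}{2\pi}\log k$, which disappears upon taking the gradient, and the mass equation \eqref{eqmass} sees the interaction only through the drift $\tfrac{1}{2\pi r}\partial_r M = M_\rho(r)/(2\pi r)$ (Newton's theorem in disguise). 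So the scaling structure is just as clean as for the homogeneous kernel in higher dimensions, and the paper indeed takes $\phi(r,t)=k(t)^2\rho_0(k(t)r)$ and $\eta(r,t)=k(t)^2\rho_0(k(t)r)$ with $k$ solving $k'=C_i k^3(1-k^{2(m-1)})$, $k(0)=a$ resp. $1/a$, yielding the rate $\lambda=2(m-1)\min\{C_1,C_2\}$ directly from the ODE.

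The replacement you propose -- additive barriers $M_{\rho_0}(r)\pm\eta e^{-\lambda t}\varphi(r)$ with $\varphi$ an eigenfunction of the linearized operator -- is not only left unconstructed (you yourself flag the control of the quadratic error near the degenerate free boundary as the main open obstacle), it cannot work for general initial data. The initial sandwich requires $M_{\underline\rho}(r,0)\leq M_\rho(r,0)\leq M_{\overline\rho}(r,0)$ for \emph{all} $r$; if $\rho(\cdot,0)$ is very spread out or very concentrated relative to $\rho_0$, the gap $|M_{\rho_0}(r)-M_\rho(r,0)|$ is of order $M$ on a range of radii, forcing $\eta$ to be of order one -- precisely the regime in which a linearization around $N$ is no longer justified. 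The dilation family avoids this because $\{a^2\rho_0(ax)\}_{a>0}$ interpolates between arbitrarily diffuse and arbitrarily concentrated profiles while remaining an \emph{exact} one-parameter family on which the PDE reduces to an ODE for $a$, so no smallness is ever needed. You should revert to the scaling barriers; with them, conditions (i)--(iii) of your plan are exactly \eqref{eq.36}, \eqref{eq.37}, \eqref{eq.39}, and the rest of your argument goes through as written.
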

\begin{proof}
We briefly provide the main arguments of the proof, since it is totally analogous to the proof of \cite[Theorem 5.6]{Kim}, which the interested reader should refer to. We can always assume that $\rho(0,0)>0$, as otherwise $\rho(0,t)$ will become positive in finite time (see \cite[Corollary 5.5]{Kim}). It is always possible to choose a small positive constant $a$ such that
\begin{equation}
a^{2}\rho_{0}(ax)\prec\rho(x,0)\,\quad\mbox{ and }\quad a^{-2}\rho_{0}(a^{-1}x)\succ\rho(x,0).\label{eq.36}
\end{equation}
For a given nonnegative function $\xi(x,t)$ which is differentiable in its positive set, let us introduce its velocity field $\overrightarrow{v}(x,t;\xi)$ through the formula
\[
\overrightarrow{v}(x,t;\xi)=-\frac{m}{m-1}\nabla(\xi^{m-1})+\nabla(\xi\ast\mathcal{K}).
\]
It is possible to prove that if we consider the velocity field $\overrightarrow{v}(x;\rho^{a}_{1})$ of $\rho^{a}_{1}(x)=a^{2}\rho_{0}(ax)$, then its inward normal component
\[
v(r)=\overrightarrow{v}(x;\rho^{a}_{1})\cdot\left(-\frac{x}{r}\right)=\frac{m}{m-1}\frac{\partial}{\partial r}\rho^{a}_{1}-\frac{\partial}{\partial r}(\rho_{1}\ast\mathcal{K})
\]
satisfies, for all $a\in (0,1)$, the estimate
\[
v(r)\geq (1-a^{2}(m-1))a^{2}r\,\frac{M_{\rho_0}(ar)}{2\pi(ar)^{2}}\geq0.
\]
Since in the positive set of $\rho_{0}$ we have, for suitable positive constants $C_{1}$, $C_{2}$,
\begin{equation}
C_1\leq\frac{M_{\rho_0}(ar)}{2\pi(ar)^{2}}\leq C_2\label{eq.38}
\end{equation}
by the previous estimate we find
\[
v(r)\geq C_{1} (1-a^{2}(m-1))a^{2}r.
\]
With the choice of $a$ for which the two relations in \eqref{eq.36} hold, we define the function
\[
\phi(r,t)=k^{2}(t)\rho_{0}(k(t)r),
\]
where we impose that the scaling factor $k(t)$ satisfies the following ODE with initial data $k(0)=a$:
\[
k^{\prime}(t)=C_{1}(k(t))^{3}(1-(k(t))^{2(m-1)}).
\]
Then one proves that $\phi(r,t)$ is a subsolution to \eqref{eqmass} and that the following exponential estimate holds
\begin{equation}
0\leq M_{\rho_{0}}(r)-M_{\phi}(r,t)\lesssim\exp(-2C_{1}(m-1)t).\label{eq.37}
\end{equation}
Similarly, we construct a supersolution to \eqref{eqmass} by taking into account the constant $C_{2}$ in \eqref{eq.38} and defining the function
\[
\eta(r,t)=k^{2}(t)\rho_{0}(k(t)r)
\]
where $k(t)$ solves this time the following ODE with initial data $k(0)=1/a$:
\[
k^{\prime}(t)=C_{2}(k(t))^{3}(1-(k(t))^{2(m-1)}).
\]
Then $\eta(r,t)$ is shown to be a supersolution to \eqref{eqmass} whose mass function satisfies the estimate
\begin{equation}
0\leq M_{\eta}(r,t)-M_{\rho_0}(r)\lesssim\exp(-2C_{2}(m-1)t)\label{eq.39}.
\end{equation}
Now, from relations $\eqref{eq.36}$ we find $\phi(\cdot,0)\prec \rho(\cdot,0)\prec\eta(\cdot,0)$, thus by Proposition \eqref{preservingconc} we get
\begin{equation}
M_{\phi}(r,t)\leq M_{\rho}(r,t)\leq M_{\eta}(r,t) \label{eq.40}
\end{equation}
for all $r,\,t>0$. Then inequalities \eqref{eq.37}-\eqref{eq.39}-\eqref{eq.40} yield
\[
\left|M_\rho(r,t)-M_{\rho_0}(r)\right|\lesssim e^{-\lambda t}
\]
where $\lambda=2(m-1)\min\left\{C_{1},C_{2}\right\}$.
\end{proof}

As a consequence, the uniqueness of a radially decreasing steady state of a given mass $M$ follows: in fact, if there were two of such steady
states $\rho_{0}$, $\bar\rho_0$, inequality \eqref{asymptconv} ensures that
\[
M_{\rho_0}(r)=M_{\bar\rho_0}(r)\,,
\]
and therefore differentiating we find $\rho_{0}=\bar\rho_0$. Summarizing the results of the last two sections, we conclude

\begin{theorem}[Uniqueness of global minimizers]\label{uniquenessteady2}
There is a unique global minimizer of the free energy functional $\mathsf{G}$ defined by \eqref{eq.1} in $\mathcal{Y}_{M}$. Moreover, such minimizer is the
unique radially decreasing, compactly supported, and smooth in its support steady state of \eqref{KellerS}  in the distributional sense \eqref{steadystdef} characterized by \eqref{eq.33}-\eqref{eq.34}.
\end{theorem}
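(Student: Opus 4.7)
The plan is to assemble the uniqueness statement by bootstrapping the results already established in the paper. From Theorem \ref{varthm}, Theorem \ref{regulth1}, and Corollary \ref{stationary} we already know that \emph{every} global minimizer of $\mathsf{G}$ in $\mathcal{Y}_M$ is automatically radially decreasing (this is Step~1 of Theorem \ref{varthm}, where the zero center-of-mass condition forces $\rho=\rho^\#$), compactly supported, continuous on $\R^2$, smooth in its support, and a distributional steady state of \eqref{KellerS} in the sense of \eqref{steadystdef}, satisfying the Euler--Lagrange system \eqref{eq.33}--\eqref{eq.34}. Thus both assertions of the theorem reduce to the single claim: \emph{there is at most one radially decreasing, compactly supported, smooth-in-its-support steady state of \eqref{KellerS} with a prescribed mass $M$.}

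To establish this uniqueness, I would use Theorem \ref{uniquenessteady} in a self-referential way. Suppose $\rho_0$ and $\bar\rho_0$ are two such steady states of mass $M$. Apply Theorem \ref{uniquenessteady} with initial datum $\rho(x,0)=\bar\rho_0(x)$, which is admissible since $\bar\rho_0$ is continuous, radially decreasing, and compactly supported with $\|\bar\rho_0\|_1=M$. Because $\bar\rho_0$ is itself a stationary solution of \eqref{KellerS}, the solution launched from it satisfies $\rho(\cdot,t)\equiv\bar\rho_0$ for all $t\geq 0$, hence $M_\rho(r,t)=M_{\bar\rho_0}(r)$ for every $r\geq 0$ and every $t\geq 0$. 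The estimate \eqref{asymptconv} then gives
\[
\left|M_{\bar\rho_0}(r)-M_{\rho_0}(r)\right|\leq C e^{-\lambda t}\quad\text{for all }t\geq 0,
\]
and letting $t\to\infty$ yields $M_{\bar\rho_0}(r)=M_{\rho_0}(r)$ for every $r\geq 0$. Since both densities are radial and smooth inside their supports, differentiating in $r$ (using $M_{\rho_0}'(r)=2\pi r\,\rho_0(r)$ and similarly for $\bar\rho_0$) gives $\rho_0\equiv\bar\rho_0$.

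Combining the two ingredients completes the argument: existence of a radially decreasing, compactly supported, smooth-in-support global minimizer (and hence a stationary state) is provided by Theorems \ref{varthm}, \ref{regulth1}, and Corollary \ref{stationary}, while the previous paragraph shows such a stationary state of mass $M$ is unique. Since every minimizer in $\mathcal{Y}_M$ belongs to this class, uniqueness of the global minimizer follows, and the minimizer coincides with the unique radially decreasing, compactly supported, smooth-in-support distributional steady state characterized by \eqref{eq.33}--\eqref{eq.34}.

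The only delicate point I expect is the self-consistent use of Theorem \ref{uniquenessteady}: one must check that taking the initial datum to be a steady state $\bar\rho_0$ is genuinely allowed (which it is, thanks to the regularity proved in Theorem \ref{regulth1}) and that the solution launched from $\bar\rho_0$ really is stationary in the sense needed by \eqref{asymptconv}. Once this is granted, the rest is a matter of letting $t\to\infty$ and differentiating the equal mass functions.
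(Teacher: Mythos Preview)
Your proposal is correct and follows essentially the same route as the paper: the paper likewise reduces the statement to the uniqueness of radially decreasing, compactly supported, smooth-in-support steady states of mass $M$, and then invokes \eqref{asymptconv} from Theorem~\ref{uniquenessteady} to force $M_{\rho_0}(r)=M_{\bar\rho_0}(r)$ for all $r$, differentiating to conclude $\rho_0=\bar\rho_0$. Your version is in fact slightly more explicit than the paper's (you spell out that one launches the evolution from the second steady state and uses its time-independence), but the argument is the same.
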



\section{Symmetry of the steady states}

The aim of this section is to establish the symmetry of any compactly supported steady state, not only of global minimizers, which in turn will yield the uniqueness of compactly supported steady states. Consider a nonnegative density $\rho \in \mathcal{Y}_M$ and notice that, thanks to the fact that $\rho \log \rho$ and $\rho \log (1+|x|^2)$ belong to $L^1 (\mathbb{R}^2)$, the logarithmic potential associated to $\rho$, denoted in the rest by $u = \mathcal{K}\ast\rho$, is well defined. This is for instance a consequence of the logarithmic HLS inequality \eqref{eq.2}, see \cite{CarlenLoss}. Let us specify the definition of steady state for the nonlinear diffusion Keller-Segel model \eqref{KellerS} following
\cite{Strohmer,BianLiu}.

\begin{definition}\label{stationarystates}
A nonnegative compactly supported density $\rho \in \mathcal{Y}_M$ is a \textbf{stationary state} for the evolution problem \eqref{KellerS} if $\rho\in
L^\infty(\mathbb{R}^2)$, $\rho^{m-1}\in W^{1,m}_{loc} (\mathbb{R}^2)$, and the couple $(\rho,u)$ satisfies
\begin{equation}\label{steady}
\Delta \rho^{m} = \frac{m}{m-1}\nabla\cdot\left(\rho\nabla \rho^{m-1}\right) = \nabla\cdot\left(\rho\nabla u\right) \text{ in } \mathbb{\R}^2
\end{equation}
in the distributional sense with $u$ being the Newtonian potential associated to $\rho$ as in \eqref{eq.12}.
\end{definition}

Let us first observe that the nonlinear term in the RHS of \eqref{steady} makes sense for compactly supported steady states. Notice that the logarithmic
potential $u=\mathcal{K}\ast\rho$ is a $L^1$ distributional solution of $-\Delta u = \rho$ with $\rho \in L^{m} (\mathbb{R}^2)$, $m>1$. Thus, using the elliptic
regularity theory \cite[Lemma 9.9]{Gilbarg}, we deduce that $u \in W^{2,m}_{loc} (\mathbb{R}^2)$ and, thanks to the Sobolev embedding in dimension 2, we have
that $u \in
C^{1,\alpha}_{loc} (\mathbb{R}^2)$ for some $\alpha>0$. On the other hand, by the fact that $\rho \in L^{m} (\mathbb{R}^2)$, $m>1$, and $\nabla u$ is locally
bounded, we see that the LHS of \eqref{steady} belongs to $W^{-1,m'}_{loc} (\mathbb{R}^2)$. Noticing that $\rho^{m}$ is a $L^1$ distributional solution of
\eqref{steady} with datum in $W^{-1,m'}$ and that $\rho^{m} = 0$ on the boundary of a sufficiently large ball by the compact support hypothesis, then $\rho^m$
is
in fact a weak $W_{loc}^{1,m}$ solution of \eqref{steady}, cfr. \cite{LerayLions}. Sobolev embedding shows that both $\rho^m$ and $\rho$ belong to some H\"older
space $C^{0,\alpha}(\mathbb{R}^2)$. Since $m>1$ and $\rho^{m-1}\in W^{1,m}_{loc} (\mathbb{R}^2)$, then $\nabla \rho^{m} = \frac{m}{m-1} \rho\nabla \rho^{m-1}$.
We conclude that wherever $\rho$ is positive, \eqref{steady} can be interpreted as
$$
\nabla \left( \frac{m}{m-1} \rho^{m-1} - u \right) = 0\,,
$$
in the sense of distributions in $\Omega=\mbox{supp}(\rho)$. Hence, the function $G(x)=\frac{m}{m-1}\rho^{m-1} - u(x)$ is constant in each connected component of
$\Omega$ and $u$ satisfies the elliptic equation $-\Delta u = g(x,u)$ with the nonlinearity $g$ given by
\begin{equation}\label{nonlinearita}
g(x,u) = \left(\frac{m-1}{m}\right)^{\frac{1}{m-1}} ((G(x) + u)^{+} )^{\frac{1}{m-1}}\,,
\end{equation}
for all $u\in\R$ and $x\in\Omega$. We are now ready to state our symmetry result.

\begin{theorem}\label{symmetry} Let $\rho \in \mathcal{Y}_M$ be any nonnegative compactly supported stationary state. Then $\rho$ is radially symmetric about
the
origin.
\end{theorem}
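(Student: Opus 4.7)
My approach is to perform the moving plane method on the logarithmic potential $u:=\mathcal{K}\ast\rho$, exploiting the semilinear equation $-\Delta u=g(x,u)$ that $u$ satisfies through \eqref{nonlinearita}. From the discussion preceding the theorem I can use that $u\in C^{1,\alpha}_{\mathrm{loc}}(\R^2)$ and $\rho\in C^{0,\alpha}(\R^2)$ with compact support $\Omega$, and that on each connected component $\Omega_i\subset\Omega$ the quantity $G(x):=\tfrac{m}{m-1}\rho^{m-1}(x)-u(x)$ is a constant $G_i$, while $\rho\equiv 0$ on the complement of $\Omega$. Since $\rho\in\mathcal{Y}_{M}$ has vanishing first moment, a direct Taylor expansion of the logarithmic kernel in $|y|/|x|$ gives the sharp asymptotics
\[
u(x)=-\tfrac{M}{2\pi}\log|x|+O(|x|^{-2})\quad\text{as}\ |x|\to\infty,
\]
which is what will replace the classical ``$u\to 0$ at infinity'' hypothesis of Gidas--Ni--Nirenberg and is made possible precisely by the zero center-of-mass constraint in $\mathcal{Y}_{M}$.

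Fix a unit vector $e\in\S^1$; up to rotation take $e=e_1$. For $\lambda\in\R$ set $T_\lambda=\{x_1=\lambda\}$, $\Sigma_\lambda=\{x_1<\lambda\}$, $x^\lambda=(2\lambda-x_1,x_2)$, and
\[
w_\lambda(x):=u(x^\lambda)-u(x),\qquad x\in\Sigma_\lambda.
\]
Then $-\Delta w_\lambda=\rho(x^\lambda)-\rho(x)$ in $\R^2$. On the subset of $\Sigma_\lambda$ where both $x$ and $x^\lambda$ belong to the same component $\Omega_i$, the algebraic relation $\rho=c_m(u+G_i)_+^{1/(m-1)}$ and the mean value theorem linearize this into
\[
-\Delta w_\lambda=a_\lambda(x)\,w_\lambda,\qquad 0\leq a_\lambda\in L^\infty_{\mathrm{loc}}(\R^2),
\]
to which standard maximum and Hopf principles apply. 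Off $\Omega$ both $\rho(x)$ and $\rho(x^\lambda)$ vanish and $w_\lambda$ is harmonic.

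I slide the plane starting from $\lambda\to-\infty$. For $\lambda\ll 0$ any $x\in\Sigma_\lambda$ satisfies $|x^\lambda|^2-|x|^2=4\lambda(\lambda-x_1)<0$, so the leading log--term in the expansion of $u$ yields $u(x^\lambda)>u(x)$ and the $O(|x|^{-2})$ remainder is uniformly negligible because $|x|\geq|\lambda|\to\infty$ on $\Sigma_\lambda$; hence $w_\lambda>0$ in $\Sigma_\lambda$ for all sufficiently negative $\lambda$. Setting
\[
\lambda^\ast:=\sup\bigl\{\lambda\in\R:\,w_\mu\geq 0\text{ on }\Sigma_\mu\text{ for every }\mu<\lambda\bigr\},
\]
I claim $\lambda^\ast\in\R$ (otherwise pushing the plane past $\Omega$ would force $u$ to be invariant under reflection across a hyperplane missing $\Omega$, hence constant on $\Omega$, which is incompatible with the algebraic relation defining $\Omega$ as a positivity set of $u+G$) and $w_{\lambda^\ast}\equiv 0$. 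Continuation at any regular $\lambda<\lambda^\ast$ combines the maximum principle in the harmonic exterior, the linear equation above inside $\Omega$, and the narrow-domain principle in the thin sliver of $\Omega\cap\Sigma_\lambda$; at $\lambda^\ast$ the strong maximum principle and Hopf's lemma force $w_{\lambda^\ast}\equiv 0$. Running this argument for every direction $e\in\S^1$ shows that $\rho$ is radially symmetric about some common point, which must be the origin since $\int_{\R^2}x\rho(x)\,dx=0$.

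The main obstacle, which accounts for the ``non-standard'' character emphasized in the introduction, is the forcing that appears in the linearized equation for $w_\lambda$ when $x$ and $x^\lambda$ belong to distinct components of $\Omega$, since then $G(x^\lambda)-G(x)\neq 0$. I expect to handle it either by using the sign information accumulated along the sliding, together with the boundary identity $G_i=-u$ on $\partial\Omega_i$, to rule the mismatch out, or by absorbing the discrepancy into a zero-order term of favorable sign so as to keep the maximum principle effective. A secondary difficulty is that $u$ diverges rather than decays at infinity, so the starting positivity of $w_\lambda$ must genuinely be extracted from the quantitative $O(|x|^{-2})$ remainder rather than from a qualitative decay hypothesis, and similarly the ``narrow-strip'' steps inside $\Omega$ must be done at the level of the specific semilinear nonlinearity $u\mapsto c_m(u+G_i)_+^{1/(m-1)}$, which is monotone but non-Lipschitz at the free boundary $u=-G_i$.
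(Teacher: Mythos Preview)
Your overall strategy---moving planes on $u=\mathcal{K}\ast\rho$, exploiting the sharp expansion $u(x)=-\tfrac{M}{2\pi}\log|x|+O(|x|^{-2})$ available thanks to the zero center-of-mass---is exactly the paper's strategy. The gap is that you have correctly identified the central obstacle but not resolved it. When $x$ and $x^\lambda$ lie in different connected components of $\Omega$, or when one of them lies outside $\Omega$ while the other is inside, the equation for $w_\lambda$ acquires a forcing term with no a~priori sign, and neither of your proposed fixes (``use the sign information accumulated along the sliding'' or ``absorb the discrepancy into a zero-order term of favourable sign'') is fleshed out enough to be a proof. In particular, the boundary identity $G_i=-u$ on $\partial\Omega_i$ tells you nothing about the relative sizes of $G_i$ and $G_j$ for $i\neq j$, so there is no evident sign to exploit.

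The paper's way out, borrowed from Str\"ohmer, is to build the geometric reflection condition into the definition of the critical parameter: one sets
\[
\Lambda:=\sup\bigl\{\lambda<0:\ u_\lambda>u\ \text{in }\Sigma_\lambda\ \text{and}\ \sigma_\lambda(\overline{\Omega\cap\Sigma_\lambda})\subset\overline{\Omega}\bigr\}.
\]
The second clause guarantees that for every $\lambda\leq\Lambda$ and every $x\in\Omega\cap\Sigma_\lambda$, the reflected point $x^\lambda$ stays in the \emph{same} connected component of $\Omega$ (the segment from $x$ to $x^\lambda$ lies in $\Omega$), whence $G(x)=G(x^\lambda)$ and the nonlinearity becomes effectively autonomous. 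The price is that the continuation step at $\Lambda$ must reproduce \emph{both} conditions at $\Lambda+\varepsilon$: the inequality $u_{\Lambda+\varepsilon}\geq u$ \emph{and} the inclusion $\sigma_{\Lambda+\varepsilon}(\overline{\Omega\cap\Sigma_{\Lambda+\varepsilon}})\subset\overline{\Omega}$. The latter requires its own compactness-and-Hopf argument, which you are missing entirely. Without this mechanism (or a genuine substitute) your moving-plane argument cannot close.
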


The proof of Theorem \ref{symmetry} will be achieved thanks to a non-standard Moving Plane type argument for $u$, especially thanks to a precise decay estimate
at infinity and a symmetry property for the function $G$ introduced above. This result is known in the corresponding range of nonlinearities in larger
dimensions. Here, the main technical difficulty is to deal with the logarithmic behavior of the Newtonian potential in two
dimensions.

First of all, we need to prove a precise decay estimate for $u$: we already know thanks to \eqref{asymptbehth1} that $u(x)$ behaves like $- M \log|x|$ when
$|x|$
is large, but unfortunately this is not enough for our purposes. Let us assume that $\mbox{supp}(\rho) \subset B_{r_o} (0)$, \textit{i.e.} $\rho(x)=0$ for any $|x|>r_o$:
we can refine the asymptotic behavior of $u$ as given by the following

\begin{proposition}\label{decay0}
There are $C_1, C_2>0$ such that for all $|x| \geq 2r_o$
\begin{equation}
\label{decay} |u(x) - M \mathcal{K}(x)| \leq C_1 r_o^2 |x|^{-2}
\end{equation}
and
\begin{equation}
\label{decaygrad} |\nabla (u(x) - M \mathcal{K}(x))| \leq C_2 r_o^2 |x|^{-3}
\end{equation}
hold.
\end{proposition}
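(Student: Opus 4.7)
The plan is to exploit the two features of $\rho$: (i) it lives in the class $\mathcal{Y}_M$, so it has zero first moment $\int y\,\rho(y)\,dy=0$; (ii) it is supported in $B_{r_o}(0)$, so $|y|\le r_o$ on $\mathrm{supp}(\rho)$. Combining these with a second-order Taylor expansion of the integrand in $y$ around $y=0$ (legitimate because for $|x|\ge 2r_o$ the point $y=0$ is well inside the region where $\log|x-y|$ and $(x-y)/|x-y|^2$ are smooth in $y$) will cancel the zero-order and linear terms and leave a quadratic remainder of the desired order.

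First, I would write
\[
u(x)-M\mathcal{K}(x)=-\frac{1}{2\pi}\int_{B_{r_o}(0)}\bigl(\log|x-y|-\log|x|\bigr)\rho(y)\,dy
=-\frac{1}{2\pi}\int \log\!\Bigl|e_x-\tfrac{y}{|x|}\Bigr|\,\rho(y)\,dy,
\]
with $e_x=x/|x|$. For $|x|\ge 2r_o$ the variable $z=y/|x|$ satisfies $|z|\le 1/2$, so $|e_x-z|\in[1/2,3/2]$ and the map $z\mapsto\log|e_x-z|$ is $C^\infty$ with derivatives bounded independently of $e_x$ (by rotation invariance). A second-order Taylor expansion at $z=0$ gives
\[
\log|e_x-z|=-e_x\cdot z+R(e_x,z),\qquad |R(e_x,z)|\le C\,|z|^2.
\]
Plugging this in and using $\int y\,\rho(y)\,dy=0$ kills the linear term, and the remainder contributes at most $\frac{C}{|x|^2}\int |y|^2\rho(y)\,dy\le C M r_o^2/|x|^2$, which gives \eqref{decay}.

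For \eqref{decaygrad} I would proceed identically on the gradient. Since $\nabla\mathcal{K}(x)=-\tfrac{1}{2\pi}\,x/|x|^2$ and $\nabla u(x)=-\tfrac{1}{2\pi}\int\frac{x-y}{|x-y|^2}\rho(y)\,dy$, I set $F(y)=(x-y)/|x-y|^2$ and Taylor expand in $y$ around $0$. A direct computation yields $F(0)=x/|x|^2$ and the Jacobian $D_yF(0)=\tfrac{1}{|x|^2}(-I+2\,e_x\otimes e_x)$, while the second-order remainder is bounded by $C|y|^2/|x|^3$ uniformly for $|y|\le|x|/2$. The constant term integrates to $M\nabla\mathcal{K}$; the linear term vanishes thanks to the zero first moment; the quadratic remainder gives $C M r_o^2/|x|^3$, proving \eqref{decaygrad}.

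The only real care to take is the uniform estimate on the Taylor remainders, namely that the bounds on the Hessian of $z\mapsto\log|e_x-z|$ and on the second derivative of $y\mapsto (x-y)/|x-y|^2$ on the region $|y|\le |x|/2$ do not depend on $x$ (after the rescaling $z=y/|x|$, the relevant set is the fixed ball $\{|z|\le 1/2\}$, so this is automatic). Everything else is bookkeeping with the moment conditions, so I do not anticipate a substantive obstacle.
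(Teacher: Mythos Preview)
Your proposal is correct and follows essentially the same route as the paper: a second-order Taylor expansion of $\mathcal{K}(x-y)$ (respectively $\nabla\mathcal{K}(x-y)$) in $y$ about $0$, using the zero center-of-mass condition to annihilate the linear term and the compact support to bound the quadratic remainder by $CMr_o^2|x|^{-2}$ (respectively $CMr_o^2|x|^{-3}$). The only cosmetic difference is that you make the uniformity of the remainder bound explicit via the rescaling $z=y/|x|$, whereas the paper obtains it directly from the degree-$(-2)$ homogeneity of $\nabla^2\mathcal{K}$; these are equivalent presentations of the same estimate.
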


\proof First of all notice that
$$
|u(x) - M \mathcal{K}(x)| = \frac{1}{2\pi} \left| \int_{\mathbb{R}^2} (\log|x-y| - \log|x|) \rho(y) \, dy \right|\,.
$$
We can proceed essentially as in the proof of \cite[Lemma 1]{Strohmer}. Notice that for $|x|\geq 2r_o$ we have $supp (\rho) \subset \{ |y| \leq \frac{|x|}{2}
\}$.
Thus, thanks to the homogeneity of the derivatives of the kernel $\mathcal{K}(x)$ and the zero center of mass condition, we have
\begin{align*}
\left|\int_{\mathbb{R}^2} (\mathcal{K}(x-y) - \mathcal{K}(x)) \rho(y) \, dy \right| &\,= \left|\int_{\mathbb{R}^2} \left( \frac{1}{2} \nabla^2
\mathcal{K}(x-\sigma y) y \cdot y - \nabla\mathcal{K}(x) \cdot y \right) \rho(y)  \, dy \right|\\
&\,\leq \frac{1}{2} \int_{|y| \leq \frac{|x|}{2}} |\nabla^2 \mathcal{K}(x-\sigma y)| |y|^2 \rho(y) \, dy \\
&\,= \frac{1}{2}  \int_{|y| \leq \frac{|x|}{2}}  \left|\nabla^2 \mathcal{K}\left(\frac{(x-\sigma y)}{|x-\sigma y|}\right)\right| \frac{|y|^2}{|x-\sigma
y|^{2}} \rho(y) \, dy\\
&\,\leq \frac{1}{2} \left( \sup_{\mathbb{S}^1} |\nabla^2 \mathcal{K}|\right) \int_{|y| \leq \frac{|x|}{2}} \frac{|y|^2}{(|x|-|y|)^{2}} \rho(y) \, dy \\
&\,
\leq  \frac{2}{|x|^2} \left( \sup_{\mathbb{S}^1} |\nabla^2 \mathcal{K}|\right) \int_{|y| \leq \frac{|x|}{2}} |y|^2 \rho(y) \, dy\\
&\,\leq 2 M r_o^2 \left( \sup_{\mathbb{S}^1} |\nabla^2 \mathcal{K}|\right) |x|^{-2}
\end{align*}
due to a simple Taylor expansion for some $0 < \sigma(y) < 1$, leading to the desired estimate \eqref{decay}. Replacing the kernel $-\log|x|$ with its $x_1$
derivative $-x_1/|x|^2$ (which is homogeneous of degree -1), the same proof leads to \eqref{decaygrad}. \qed

We will now start a moving plane type procedure in order to establish that the solution is even with respect to the first variable. Then, thanks to the
rotational invariance, we will deduce that $u$ is even with respect to any plane through the origin, which in turn means that $u$ is in fact radial. Hence, let
$x_\lambda = \sigma_\lambda (x) =(2\lambda - x_1, x_2)$ be the reflection of $x \in \Sigma_{\lambda} = \{ x_1 < \lambda \}$ with respect to the plane $T_\lambda = \{ x_1 = \lambda\}$
and let $u_\lambda (x) := u (x_\lambda) = u(2\lambda-x_1,x_2)$ be the corresponding reflection of $u$.

Thanks to the strict monotonicity of our kernel, we will start by showing that well away from the $x_2$ axis the difference of $u$ and $u_\lambda$ is
nonpositive as shown by the following:

\begin{lemma}\label{mmp1}
If $\lambda < - r_o$ then $u_\lambda (x) \geq u(x)$ for any $x \in \Sigma_\lambda$.
\end{lemma}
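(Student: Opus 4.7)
The plan is to reduce the inequality $u_\lambda(x) \geq u(x)$ to a purely geometric fact about the kernel $\mathcal{K}(z) = -\tfrac{1}{2\pi}\log|z|$: namely, that for $\lambda < -r_o$ and $x \in \Sigma_\lambda$, the reflected point $x_\lambda$ is strictly closer to every point of $\mbox{supp}(\rho)$ than $x$ itself. Writing $u$ explicitly as a convolution, this monotonicity of the kernel will do everything.

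First I would write out the difference
\[
u_\lambda(x) - u(x) = \frac{1}{2\pi} \int_{\mathbb{R}^2} \bigl(\log|x-y| - \log|x_\lambda - y|\bigr) \rho(y)\, dy,
\]
and show that the integrand is pointwise nonnegative on $\mbox{supp}(\rho)$. For this, note that $\mbox{supp}(\rho) \subset B_{r_o}(0)$ and the assumption $\lambda < -r_o$ force $y_1 \geq -r_o > \lambda$ for every $y \in \mbox{supp}(\rho)$; thus $\mbox{supp}(\rho)$ lies on the same side of the hyperplane $T_\lambda$ as $x_\lambda$. A one-line computation then gives
\[
|x-y|^2 - |x_\lambda - y|^2 = (x_1 - y_1)^2 - (2\lambda - x_1 - y_1)^2 = -4(x_1 - \lambda)(y_1 - \lambda),
\]
which is strictly positive because $x_1 - \lambda < 0$ while $y_1 - \lambda > 0$. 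Hence $|x-y| > |x_\lambda - y|$ on $\mbox{supp}(\rho)$ and, since $\rho \geq 0$, the integral is nonnegative.

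There is really no substantive obstacle here: the statement is the classical starting step of a moving planes argument, and it rests entirely on the fact that the logarithmic kernel is strictly decreasing in $|z|$ combined with the observation that for $\lambda < -r_o$ the hyperplane $T_\lambda$ separates $\Sigma_\lambda$ from the entire support of $\rho$. The genuine technical difficulties (the logarithmic growth of $u$ at infinity, the degeneracy of the nonlinearity $g(x,u)$, and the behavior of $G$) only surface in the subsequent steps where $\lambda$ is increased toward $0$; for this initial lemma the half-space geometry alone suffices.
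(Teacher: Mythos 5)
Your proof is correct, and it takes a genuinely different route from the paper. You argue pointwise under the integral sign: since $\lambda<-r_o$ forces $\mbox{supp}(\rho)\subset\{y_1>\lambda\}$ while $x\in\Sigma_\lambda$ has $x_1<\lambda$, the identity $|x-y|^2-|x_\lambda-y|^2=-4(x_1-\lambda)(y_1-\lambda)>0$ shows the reflected point is strictly closer to every point of the support, and the monotonicity of $\mathcal{K}$ together with $\rho\geq0$ does the rest. The paper instead runs a comparison-principle argument: it invokes the decay estimate \eqref{decay} of Proposition \ref{decay0} to get $\limsup_{|x|\to\infty}[u-u_\lambda]\leq 0$, notes $u=u_\lambda$ on $T_\lambda$ and $\Delta u=-\rho(x)=0\geq-\rho(x_\lambda)=\Delta u_\lambda$ in $\Sigma_\lambda$ (since $|x|>r_o$ there), and concludes by the maximum principle. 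Your argument is more elementary --- it needs neither the asymptotic expansion of $u$ nor any PDE machinery --- and it even yields the strict inequality $u_\lambda>u$ in the open half-plane for free, which is what the definition of $\Lambda$ in \eqref{lamax} actually uses. What the paper's approach buys is uniformity with the subsequent steps: once $\lambda$ increases past $-r_o$ the support meets $\Sigma_\lambda$, the sign of the integrand is no longer controlled pointwise, and only the comparison/maximum-principle framework (set up here via \eqref{compar}) survives; your kernel argument is specific to this first step.
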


\proof From \eqref{decay}, for $|x| \geq 2r_o$ and $|x_{\lambda}| \geq 2r_o$ we find:
\begin{equation}\label{limit1}
u(x) - u_\lambda (x) \leq \frac{M}{2\pi} \log \frac{|x_{\lambda}|}{|x|} + C r_o^2 (|x|^{-2} + |x_{\lambda}|^{-2})
\end{equation}
For $x \in \Sigma_\lambda$ and $\lambda < 0$, we observe that $|x|-\lambda\leq |x_{\lambda}| \leq |x|$. Hence, we get
\begin{equation}\label{limit2}
\limsup_{|x|\to \infty}[ u (x) - u_\lambda (x)] \leq 0 \, \text{ for any } x \in \Sigma_\lambda\,,
\end{equation}
while $u=u_\lambda$ on $T_\lambda$ by definition. Observe that since $\rho$ is $C^{0,\alpha}$, by Schauder estimates $u$ and $u_\lambda$ satisfy respectively
$$
-\Delta u (x) = \rho(x) \quad , \quad -\Delta u_\lambda (x) = \rho(x_\lambda)
$$
in the classical pointwise sense. Moreover, since for $|x| > r$ we have that $\rho(x) = 0$ while $\rho (x_\lambda) \geq 0$ always, by \eqref{limit2} we finally
obtain
\begin{equation}\label{compar}
\Delta u \geq \Delta u_\lambda \text{ in } \Sigma_\lambda \; , \; u \leq u_\lambda \text{ on } \partial \Sigma_\lambda \; , \; \limsup_{|x|\to \infty}[ u-
u_\lambda] \leq 0.
\end{equation}
By \eqref{compar} and the classical comparison principle, we find $u \leq u_\lambda$ on $\Sigma_\lambda$.\qed

Next, we will show that the same property of Lemma \ref{mmp1}, for a fixed negative $\lambda$, is true outside a sufficiently large ball.

\begin{lemma}\label{mmp2}
For any $\lambda < 0$, there exists $R_\lambda > 0$ such that $u_\lambda (x) \geq u(x)$ for any $x \in (B (0, R_\lambda))^c \cap \Sigma_\lambda$.
\end{lemma}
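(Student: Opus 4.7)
The plan is to exploit the refined decay estimates in Proposition~\ref{decay0}, using both \eqref{decay} for $u$ and \eqref{decaygrad} for its gradient. The underlying elementary identity, for $x\in\Sigma_\lambda$ with $\lambda<0$, reads
\[
|x|^{2}-|x_\lambda|^{2}\,=\,x_1^{2}-(2\lambda-x_1)^{2}\,=\,4|\lambda|(\lambda-x_1)\,=:\,4|\lambda|\,a\,>\,0,
\]
so $|x_\lambda|<|x|$ and, via $\log(1+t)\ge t/(1+t)$,
\[
\log\frac{|x|}{|x_\lambda|}\,=\,\tfrac12\log\!\left(1+\frac{4|\lambda|a}{|x_\lambda|^{2}}\right)\,\ge\,\frac{2|\lambda|\,a}{|x|^{2}}.
\]
Morally, $u_\lambda(x)-u(x)\simeq \tfrac{M}{2\pi}\log(|x|/|x_\lambda|)$ with corrections controlled by \eqref{decay}--\eqref{decaygrad}. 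The difficulty is that, for $|x|$ large, $x_1$ may be arbitrarily close to $\lambda$ (as long as $|x_2|$ is correspondingly large), in which case $a=\lambda-x_1$ is small and the main term, of order $a/|x|^{2}$, could be swamped by the $O(r_o^{2}/|x|^{2})$ error arising from \eqref{decay}. I therefore split the analysis according to a threshold $\delta_\lambda>0$ depending on $M$, $|\lambda|$, $r_o$, and on the constants of Proposition~\ref{decay0}.

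In the regime $\{a\ge \delta_\lambda\}\cap\Sigma_\lambda$, applying \eqref{decay} at both $x$ and $x_\lambda$ yields
\[
u_\lambda(x)-u(x)\,=\,\frac{M}{2\pi}\log\frac{|x|}{|x_\lambda|}+R_1(x),\qquad |R_1(x)|\,\lesssim\,r_o^{2}\bigl(|x|^{-2}+|x_\lambda|^{-2}\bigr).
\]
For $|x|$ larger than a multiple of $|\lambda|$, the inequality $a\le |\lambda|+|x|$ entails $|x_\lambda|^{2}\ge |x|^{2}/2$, so the main term is bounded below by $M|\lambda|\delta_\lambda/(\pi|x|^{2})$; choosing $\delta_\lambda$ of order $r_o^{2}/|\lambda|$ then forces it to dominate $|R_1(x)|$, giving $u_\lambda(x)\ge u(x)$.

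The delicate range is $\{0<a<\delta_\lambda\}\cap\Sigma_\lambda$, where $x_1$ is confined to a bounded interval and, for $|x|$ large, $|x_2|\ge |x|/\sqrt{2}\ge 2r_o$. Using $u\in C^{1}(\R^{2})$ (from $u\in W^{2,m}_{loc}$ and Morrey's embedding, as already observed in Theorem~\ref{regulth1}), I write
\[
u_\lambda(x)-u(x)\,=\,\int_{x_1}^{2\lambda-x_1}\partial_1 u(s,x_2)\,ds\,=\,\frac{M}{2\pi}\log\frac{|x|}{|x_\lambda|}+\int_{x_1}^{2\lambda-x_1}(\partial_1 u-M\partial_1\mathcal{K})(s,x_2)\,ds,
\]
the main term being $M[\mathcal{K}(x_\lambda)-\mathcal{K}(x)]$. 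Bounding the second integral by \eqref{decaygrad} together with $(s^{2}+x_2^{2})^{3/2}\ge |x_2|^{3}$ produces a remainder of order $a\,r_o^{2}/|x|^{3}$; hence
\[
u_\lambda(x)-u(x)\,\gtrsim\,\frac{a}{|x|^{2}}\left(\frac{M|\lambda|}{\pi}-\frac{C\,r_o^{2}}{|x|}\right)\,\ge\,0
\]
for $|x|$ large enough. The main obstacle is precisely this second region: \eqref{decay} alone leaves an error with no factor of $a$ and cannot beat the main term as $a\to 0$, whereas integrating the sharper bound \eqref{decaygrad} along the horizontal segment from $x$ to $x_\lambda$ automatically produces the extra factor $a$ in the remainder, putting it at order $a/|x|^{3}$ and restoring the right balance. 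Choosing $R_\lambda$ to exceed all the thresholds arising in the two cases concludes the proof.
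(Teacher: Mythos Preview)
Your proof is correct and follows essentially the same strategy as the paper's: split $\Sigma_\lambda$ according to the distance $a=\lambda-x_1$ from the hyperplane, use \eqref{decay} in the far region $\{a\ge\delta_\lambda\}$, and use the gradient estimate \eqref{decaygrad} in the thin strip $\{0<a<\delta_\lambda\}$ where $|x_2|$ is necessarily large. The only cosmetic difference is that in the strip the paper shows $\partial_{x_1}(u_\lambda-u)<0$ and then argues by monotonicity from $T_\lambda$, whereas you write $u_\lambda-u$ directly as $\int_{x_1}^{2\lambda-x_1}\partial_1 u(s,x_2)\,ds$, split off the exact contribution $M(\mathcal{K}(x_\lambda)-\mathcal{K}(x))$, and bound the remainder by \eqref{decaygrad}; this is arguably cleaner since the factor $a$ in the error appears automatically as the length of the integration segment.
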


\proof For any $x \in \Sigma_\lambda$, we have as above $|x|-\lambda\leq |x_{\lambda}| \leq |x|$ and thus,
$$
\lim_{|x| \to \infty} \frac{|x_\lambda|^2}{|x|^2} = 1\,.
$$
This easily implies that there exists $R_{1, \lambda}$ sufficiently large such that
\begin{equation}\label{comm}
|x_\lambda| \leq |x| \leq 2 |x_\lambda| \; \text{ for } |x| \geq R_{1, \lambda}\,.
\end{equation}
Using \eqref{limit1} in view of \eqref{comm} for $x_1 < -R_{1, \lambda}$ we have
\begin{align*}
u(x) - u_\lambda (x) &\leq \frac{M}{2\pi} \log \frac{|x_{\lambda}|}{|x|} + C r_o^2 (|x|^{-2} + |x_{\lambda}|^{-2})\\
&\leq \, \frac{M}{4\pi} \log \frac{|x_{\lambda}|^2}{|x|^2} + 5C r_o^2|x|^{-2}\\
&= \, \frac{M}{4\pi} \log \left(1+ \frac{4 \lambda (\lambda -x_1)}{|x|^2}\right) + 5C r_o^2|x|^{-2}\\
&\leq \, \frac{ M \lambda (\lambda -x_1) + 5 C \pi r_o^2}{|x|^{2}} < 0
\end{align*}
if furthermore we assume $x_1 < \lambda - \frac{5C \pi r_o^2}{M |\lambda|}$. In particular we have found that
\begin{equation}\label{segno1}
u(x) - u_\lambda (x) < 0  \text{ for } x_1 < \mu_{1,\lambda} := \min\left\{ -R_{1, \lambda}, \lambda - \frac{5C \pi r_o^2}{M |\lambda|} \right\}.
\end{equation}
Observe that by continuity and \eqref{segno1} we have that $u(x) - u_\lambda (x) \leq 0$ on $x_1 = \mu_{1,\lambda}$. Notice that if $\mu_{1, \lambda} \leq x_1 <
\lambda$ and $|x_2| \geq 2r_o$, we can apply \eqref{decaygrad} in order to get
\begin{equation}\label{decayx1}
|u_{x_1} (x) - M \mathcal{K}_{x_1} (x)| \leq C r_o^2 |x|^{-3}
\end{equation}
Recalling that $\frac{\partial}{\partial x_1} u_\lambda (x) = - u_{x_1} (2\lambda-x_1,x_2)$, choosing $|x_2| \geq \max\{ 2r_o, R_{1, \lambda}\}$, we can apply
\eqref{decayx1} and \eqref{comm} in order to deduce that
\begin{align*}
\frac{\partial}{\partial x_1} (u_\lambda (x) - u (x)) &\leq M\left(\frac{x_1}{2 \pi |x|^2}+ \frac{2 \lambda - x_1}{2 \pi |x_{\lambda}|^2}\right) + C
r_o^2(|x|^{-3} +
|x_{\lambda}|^{-3})\\
&\, \leq M \frac{\lambda}{\pi |x|^2} + M(2\lambda-x_1) \left(\frac{1}{2 \pi |x_{\lambda}|^2} - \frac{1}{2 \pi |x|^2} \right) + 9C r_o^2|x|^{-3}\\
&\, \leq M\frac{\lambda}{\pi |x|^2} +  M\frac{4\lambda(2\lambda-x_1)(x_1-\lambda)}{2 \pi |x_{\lambda}|^2 |x|^2} + 9C r_o^2|x|^{-3}\\
&\, \leq M\frac{\lambda}{\pi |x|^2} +  M\frac{8\lambda(2\lambda-\mu_{1,\lambda})(\mu_{1,\lambda}-\lambda)}{ \pi |x|^4} + 9C r_o^2|x|^{-3} =\\
&\, = M\frac{\lambda}{\pi |x|^2}\left( 1+ O \left(\frac{1}{|x|} \right)\right)<0
\end{align*}
if furthermore $|x_2| > R_{2, \lambda}$ sufficiently large. Thus, choosing $\mu_{2, \lambda} = \max \left\{ 2r_o; R_{1, \lambda};R_{2, \lambda} \right\}$, we
get
$$
(u(x) - u_\lambda (x))_{x_1}<0 \text{ for any } \mu_{1, \lambda} \leq x_1 < \lambda \text{ and } |x_2| \geq \mu_{2, \lambda}
$$
while $u \leq u_\lambda$ for $x_1=\mu_{1,\lambda}$. This in turn implies that
\begin{equation}\label{segno2}
u(x) - u_\lambda (x)<0 \text{ for any } \mu_{1, \lambda} \leq x_1 < \lambda \text{ and } |x_2| \geq \mu_{2, \lambda}.
\end{equation}
Finally, from \eqref{segno1} and \eqref{segno2}, choosing $R_\lambda := \sqrt{\mu_{1, \lambda}^{2} + \mu_{2, \lambda}^{2}}$, we obtain $u(x) - u_\lambda (x)
\leq
0$ for any $x \in (B (0, R_\lambda))^c \cap \Sigma_\lambda$. The proof of this Lemma is illustrated in Figure \ref{fig:Figura Lemma 2}. \qed

\

\begin{figure}
	\centering
		\includegraphics[width=0.70\textwidth]{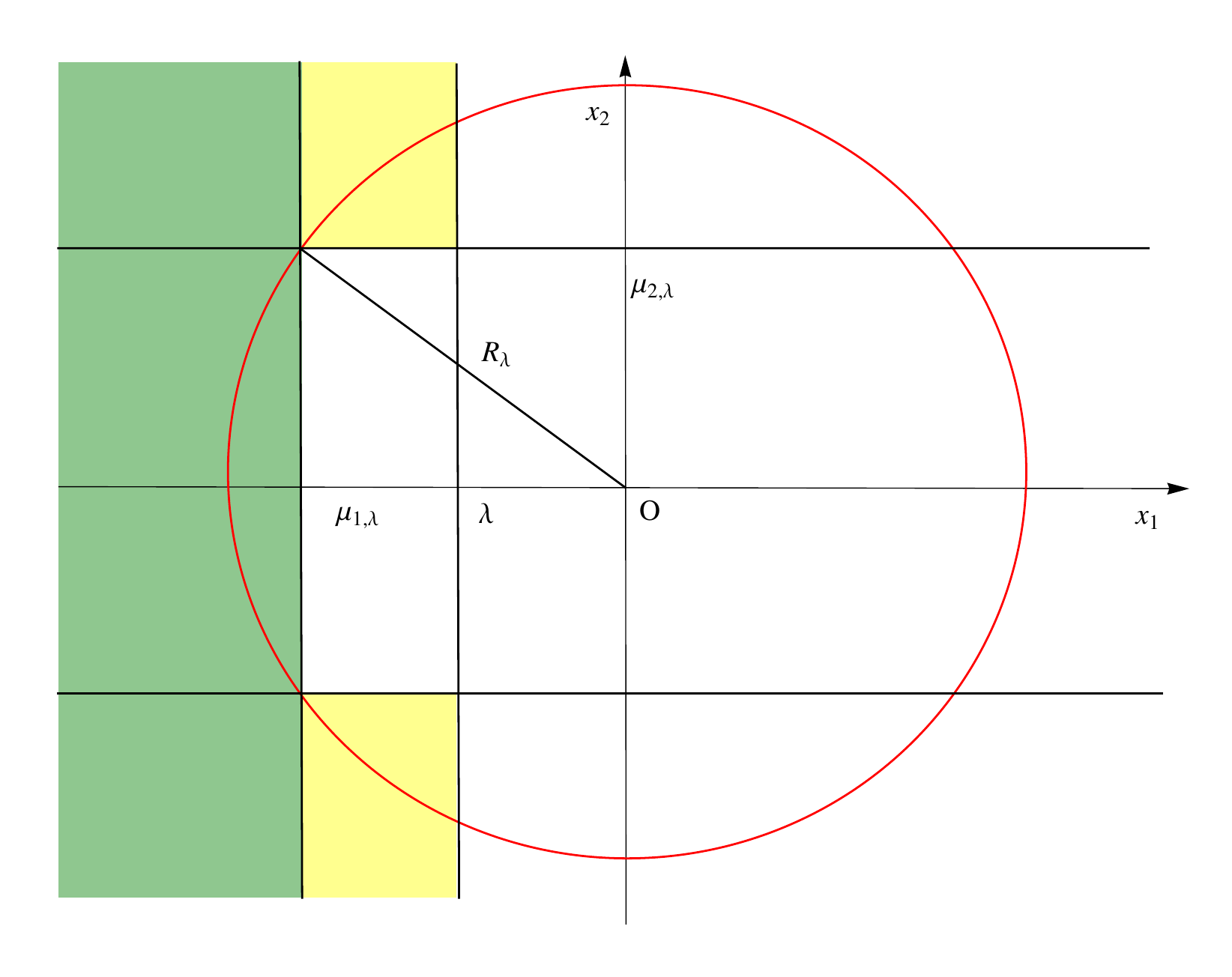}
\caption{Illustration for the estimates of Lemma \ref{mmp2}.}
\label{fig:Figura Lemma 2}
\end{figure}

Thanks to Lemma \ref{mmp1} and to the fact that $\sigma_\lambda (\overline{\Omega \cap \Sigma_\lambda})$ is empty for $\lambda < - r_0$, the following quantity is well defined,
\begin{equation}\label{lamax}
\Lambda := \sup \{\lambda < 0 : u_\lambda(x) > u(x) \text{ for any } x \in \Sigma_\lambda \text{ and } \sigma_\lambda (\overline{\Omega \cap \Sigma_\lambda}) \subset \overline{\Omega} \}\,.
\end{equation}
Moreover, by the continuity with respect to $\lambda$ and the fact that $\Sigma_\lambda$ is a decreasing set-valued function of $\lambda$, we see that $\Lambda$ is in fact attained. Our aim is then to show that $\Lambda = 0$. Hovever, since our problem is not autonomous (notice the $x$ dependence of the nonlinearity $g(x,u)$ given by \eqref{nonlinearita}), we cannot proceed with a standard moving plane argument and we need to recall from \cite{Strohmer} an important reflection property of
the function $G(x)$ introduce above.

\begin{lemma}\label{mmp3}
$G(x)=G(x_\lambda)$ for any $x \in \Sigma_\lambda$ and $\lambda \leq \Lambda$.
\end{lemma}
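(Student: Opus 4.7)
My plan is to exploit the structural fact, established in the discussion preceding the lemma, that $G$ is constant on each connected component of $\Omega=\mbox{supp}(\rho)$, together with the two properties guaranteed by $\lambda\le\Lambda$: namely, $u_\lambda\ge u$ on $\Sigma_\lambda$ and $\sigma_\lambda(\overline{\Omega\cap \Sigma_\lambda})\subset\overline{\Omega}$. Since $\rho$ is H\"older continuous and compactly supported (as for stationary states under Definition \ref{stationarystates}), $\Omega$ decomposes into finitely many connected components $\Omega_1,\dots,\Omega_k$ on which $G$ takes the constant values $G_1,\dots,G_k$. By connectedness of $\Omega_i\cap\Sigma_\lambda$ and continuity of $\sigma_\lambda$, the support inclusion defines an index map $i\mapsto j(i)$ with $\sigma_\lambda(\Omega_i\cap\Sigma_\lambda)\subset\overline{\Omega_{j(i)}}$, and the lemma reduces to the claim $G_i=G_{j(i)}$ whenever $\Omega_i\cap\Sigma_\lambda\ne\emptyset$.

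To exploit the equation, the first step is the key identity obtained by subtracting the Euler--Lagrange relation $\frac{m}{m-1}\rho^{m-1}(x)=u(x)+G(x)$ at $x\in\Omega_i\cap\Sigma_\lambda$ and at $x_\lambda\in\Omega_{j(i)}$, namely
\[
G_{j(i)}-G_i \;=\;\frac{m}{m-1}\bigl(\rho^{m-1}(x_\lambda)-\rho^{m-1}(x)\bigr)\;-\;w(x),
\]
where $w:=u_\lambda-u\ge 0$ on $\Sigma_\lambda$. The left-hand side is a constant, so the right-hand side must be independent of the point $x$ chosen.

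The next step is a case analysis. If $\Omega_i\cap T_\lambda\ne\emptyset$, pick $x\in\Omega_i\cap T_\lambda$: then $x=x_\lambda$ and $w(x)=0$, whence necessarily $j(i)=i$ and the identity is trivial. If instead $\Omega_i\subset\Sigma_\lambda$ has no contact with $T_\lambda$, I let $x\to\partial\Omega_i$: H\"older continuity of $\rho$ gives $\rho(x)\to 0$, and choosing the approach so that $x_\lambda\to\sigma_\lambda(\partial\Omega_i)\subset\partial\Omega_{j(i)}$ forces $\rho(x_\lambda)\to0$ as well. The identity then collapses to $G_{j(i)}-G_i=-w(x_*)\le 0$ for some $x_*\in\partial\Omega_i$. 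Running the symmetric reflection argument starting from $\Omega_{j(i)}$ (i.e.\ using the same relation with the roles of $x$ and $x_\lambda$ reversed) produces the reverse inequality, and combining the two yields $G_i=G_{j(i)}$.

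The main obstacle I expect is precisely the case where the inclusion is strict, $\sigma_\lambda(\Omega_i\cap\Sigma_\lambda)\subsetneq\Omega_{j(i)}$: then $\sigma_\lambda(\partial\Omega_i)$ can meet the interior of $\Omega_{j(i)}$, where $\rho(x_\lambda)$ is bounded away from zero, so the boundary approach above fails to kill both density terms simultaneously. In that situation I would bypass the direct limit and instead use the non-negativity $w\ge 0$ plus the strong maximum principle applied to $w$, noting that if $G_{j(i)}\ne G_i$ one can produce a sign in $-\Delta w=\rho(x_\lambda)-\rho(x)$ on a portion of $\Sigma_\lambda$, which combined with the strict inequality $u_\lambda>u$ valid for all $\lambda<\Lambda$ and the decay \eqref{decay}--\eqref{decaygrad} at infinity would allow one to slide the plane further, contradicting the maximality of $\Lambda$ in \eqref{lamax}. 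This leaves $G_i=G_{j(i)}$ as the only possibility, which is the lemma.
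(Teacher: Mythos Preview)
Your approach has a genuine gap in the second case ($\Omega_i\subset\Sigma_\lambda$). The ``symmetric reflection argument starting from $\Omega_{j(i)}$'' does not yield the reverse inequality: the hypothesis $u_\lambda\ge u$ is one-sided on $\Sigma_\lambda$, and the only inclusion you possess is $\sigma_\lambda(\Omega_i\cap\Sigma_\lambda)\subset\overline{\Omega_{j(i)}}$, not its inverse; there is no reason why reflecting points of $\Omega_{j(i)}$ (which lies on the far side of $T_\lambda$) should land in $\Omega_i$, so you cannot re-run the argument with roles exchanged. Simply multiplying your displayed identity by $-1$ reproduces the same equation and gives no new information. More seriously, your proposed bypass via ``sliding the plane further, contradicting the maximality of $\Lambda$'' is circular: Lemma~\ref{mmp3} is exactly the ingredient the moving-plane proof of Theorem~\ref{symmetry} needs in order to write $-\Delta u_\Lambda=g(x,u_\Lambda)$ and then invoke the strong comparison principle, so you cannot appeal to the output of that procedure while establishing one of its inputs. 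There is also a smaller unjustified step: $\Omega_i\cap\Sigma_\lambda$ need not be connected, so the index map $i\mapsto j(i)$ is not well defined as you wrote it.

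The paper's proof bypasses all of this with a one-line topological observation that you are missing. Fix $x\in\omega$ (a component of $\Omega$) with $x_1=\mu$, and let the parameter $\lambda$ vary over $[\mu,\Lambda]$. Then $\sigma_\lambda(x)=(2\lambda-\mu,x_2)$ traces a horizontal line segment, and the support-inclusion condition built into \eqref{lamax} forces this entire segment to lie in $\Omega$. Since the segment is connected and starts at $\sigma_\mu(x)=x\in\omega$, it must remain in the same component $\omega$; hence $x_\lambda\in\omega$ and $G(x)=G(x_\lambda)$ immediately, with no comparison of constants and no boundary limits required.
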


\proof Without loss of generality we will prove the statement for $x \in \Omega \cap \Sigma_\lambda$ and derive by continuity the property up to $\overline{\Omega \cap \Sigma_\lambda}$. By \eqref{lamax} we have that $\sigma_\lambda (\Omega \cap \Sigma_\lambda) \subset \overline{\Omega}$ and, as the reflection $\sigma_\lambda$ is a homeomorphism and sends interior points to interior points of $\Omega$, we get that $\sigma_\lambda (\Omega \cap \Sigma_\lambda) \subset \Omega$. Now, we will prove that $\sigma_\lambda (\omega \cap \Sigma_\lambda) \subset \omega$ for any component $\omega$ of $\Omega$ and for $\lambda \leq \Lambda$. Let us fix any  $x \in \omega$ and let $\mu = x_1$. Then $\sigma_\mu(x) =x$, $x \in \Sigma_\lambda$ for any $\lambda > \mu$ and $\sigma_\lambda (x) \in \Omega$ for any $\mu \leq \lambda \leq \Lambda$. Since $\sigma_\mu (x) = x \in \omega$ and the range of $\sigma_\lambda (x)$, for $\mu \leq \lambda \leq \Lambda$, is a line segment wholly inside $\Omega$, we necessarily have that $x_\lambda \in \omega$ for any $\mu \leq \lambda \leq \Lambda$, which in turn implies that $G(x) = G(x_\lambda)$. \qed

We are now ready to prove Theorem \ref{symmetry}.

\subsection{Proof of Theorem \ref{symmetry}}

We will proceed with a moving plane argument for $u(x):=(\mathcal{K}\ast\rho)(x)$. Recalling the definition of $\Lambda$ given in \eqref{lamax} as the maximal
negative parameter $\lambda$ for which the reflection $u_\lambda$ is larger than $u$, we want to prove that $\Lambda = 0$. So let us argue by contradiction and
suppose that $\Lambda < 0$. Recall that $u$ satisfies $-\Delta u = g(x,u)$ with the nonlinearity $g(x,u)$ in \eqref{nonlinearita} being nonnegative and
increasing with respect to $u$. By direct computation and thanks to Lemma \ref{mmp3},
we also get
$$
-\Delta u_{\Lambda} = g(x_{\Lambda},u_{\Lambda}) = g(x,u_{\Lambda})\,.
$$
Now, by the continuity with respect to $\lambda$ and $x$ we have that $u_\Lambda (x) \geq u(x)$ for $x \in \Sigma_\Lambda$. This implies that $g(x,u) \leq
g(x_\Lambda,u_\Lambda)$ for any $x \in \Sigma_\Lambda$, i.e. $\Delta u \geq \Delta u_\Lambda$. By the strong comparison principle, we infer that either $u \equiv
u_\Lambda$
or $u < u_\Lambda$ on $\Sigma_\Lambda$. But the case $u \equiv u_\Lambda$ can be ruled as follows. Since $u$ satisfies
\begin{equation}
-\Delta u=\rho,\label{eq.20}
\end{equation}
we have that $\rho(x)=\rho(x_{\Lambda})$, thus by the zero center of mass condition for $\rho$ and through the change of variables
$(x_1',x_2')=(2\Lambda-x_1,x_2)$, we get
\[
0=\int_{\R^{2}}x_{1}\rho(x)dx=\int_{\R^2}(2\Lambda-x^{\prime}_{1})\rho(2\Lambda-x^{\prime}_{1},x^{\prime}_{2})dx^{\prime}=
\int_{\R^2}(2\Lambda-x^{\prime}_{1})\rho(x^{\prime}_{1},x^{\prime}_{2})=2\Lambda M\,,
\]
therefore $\Lambda=0$, a contradiction. Hence $u < u_\Lambda$ on $\Sigma_\Lambda$.

For small $\varepsilon>0$ with $\Lambda+\varepsilon<0$, let us consider the cap $\Sigma_{\Lambda+\varepsilon}$ with corresponding reflection
$u_{\Lambda+\varepsilon}$.
We will proceed by dividing $\Sigma_\Lambda$ into four subsets as illustrated in Figure \ref{fig:digraph}.
Let $R_{\Lambda+\varepsilon} > 2r_o$ be as in Lemma \ref{mmp2} and consider the concentric balls $B(0,2r_o)$ and $B(0,R_{\Lambda+\varepsilon})$. We divide the
cap
$\Sigma_{\Lambda+\varepsilon}$ in four subsets given by
$$
A_1 := \overline{B(0, R_{\Lambda+\varepsilon})}^c \cap \Sigma_{\Lambda+\varepsilon}; \; A_2 := \overline{B (0, 2r_o)} \cap \Sigma_{\Lambda-\varepsilon};
$$
$$
A_3 := \overline{B (0, 2r_o)} \cap (\Sigma_{\Lambda+\varepsilon} \setminus \Sigma_{\Lambda-\varepsilon}); \; A_4 := (B (0, R_{\Lambda+\varepsilon}) \setminus
\overline{B (0, 2r_o)}) \cap \Sigma_{\Lambda+\varepsilon}
$$
\begin{figure}[ht]
\centering
\includegraphics[scale=.7]{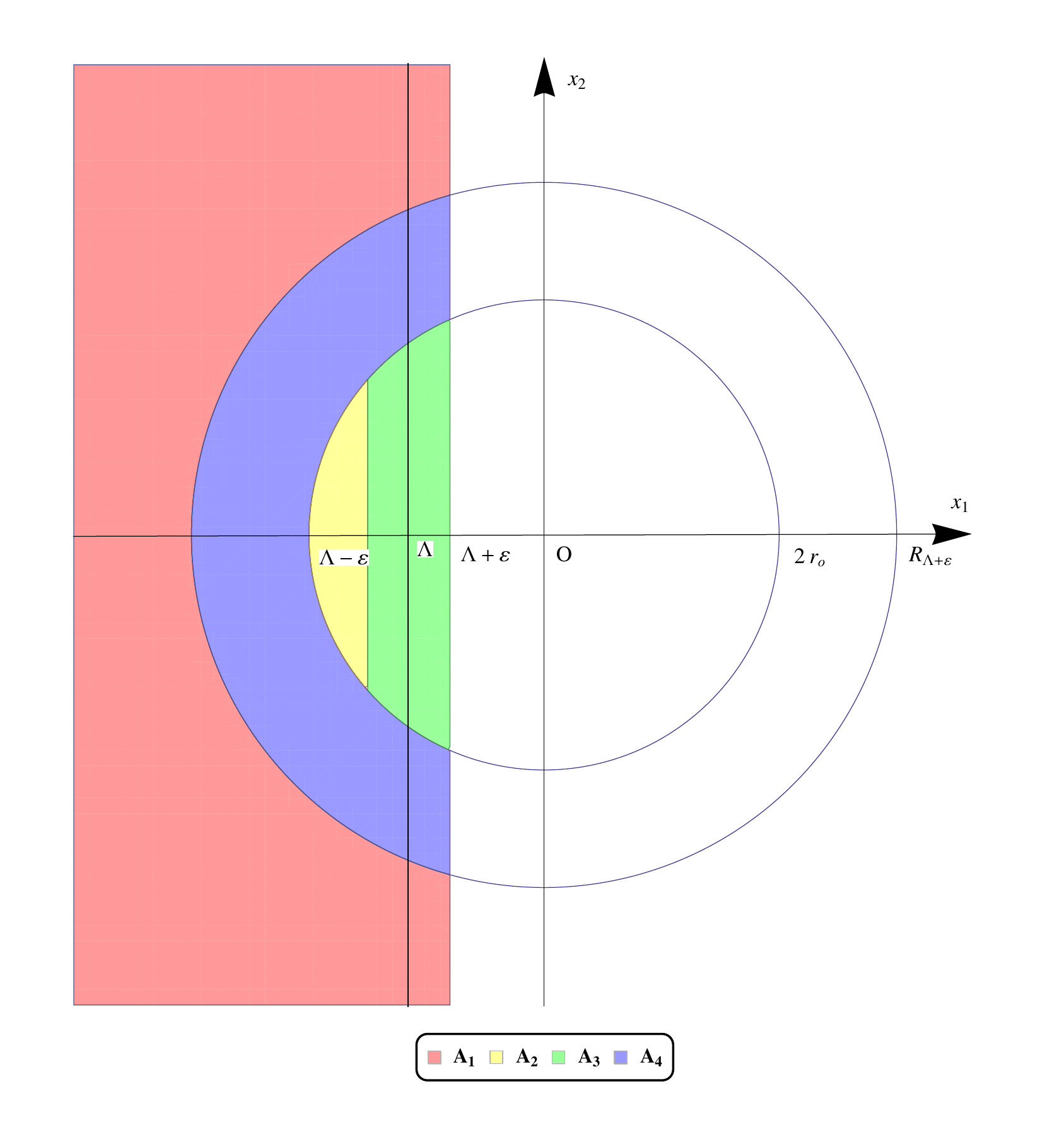}
\caption{Illustration of the division of sets for the proof of Theorem \ref{symmetry}.}
\label{fig:digraph}
\end{figure}
On the set $A_1$, we can just apply Lemma \ref{mmp2} in order to get that
$
u_{\Lambda+\varepsilon} (x) \geq u (x) \text{ for all } x \in A_1.
$
Observe that the set $A_2$ is compact, then by continuity the fact that $u_\Lambda (x) > u(x)$ implies that $u_\Lambda (x) \geq u(x) + 2\sigma$ for some small
$\sigma>0$ and for all $x \in A_2$. Thus, by the continuity with respect to $\lambda$, we see that $u_{\Lambda+\varepsilon} (x) \geq u (x) + \sigma$ for all $x
\in A_2$ for $\varepsilon$ small enough, which means
$$u_{\Lambda+\varepsilon} (x) \geq u (x) \text{ for all } x \in A_2.$$
On the set $A_3$ we need to argue as follows. Since $u < u_\Lambda$ on $\Sigma_\Lambda$ and $u = u_\Lambda$ on $T_\Lambda$, by Hopf's Lemma we know that
$\partial_\nu (u - u_\Lambda) > 0$ on $T_\Lambda$. But $\partial_\nu (u - u_\Lambda) = 2 \partial_\nu u = 2 u_{x_1}>0$ on $T_\Lambda$. In particular, there
exists a constant $\sigma_o$ such that $u_{x_1} \geq \sigma_o >0$ on $T_\Lambda \cap B(0,2r_o)$, which implies that there exists $\varepsilon>0$ small enough
such that $u_{x_1} \geq \sigma_o/2$ on $A_3$ by continuity. Then we can show that
$$
u_{\Lambda+\varepsilon}(x)-u(x)=u(2(\Lambda+\varepsilon)-x_1,x_2)-u(x_1,x_2)=\int_{x_1}^{2(\Lambda+\varepsilon)-x_1}\!\!\!\!\!\!\!\!\!\!\!\!\!\!\!\!\!\!\!\!\!
u_{x_1}(s,x_2) \,ds \geq \sigma_o (\Lambda+\varepsilon-x_1)\geq 0
$$
on $A_3$.
Finally, for any $x \in A_4$ we have $\rho(x_{\Lambda+\varepsilon}) \geq 0$ and $\rho(x)=0$ because $|x|>2r_o$. Moreover, from the sign condition proved on the
other $A_j$, $j=1,..3$, and the fact that $u_{\Lambda+\varepsilon} = u$ on $T_{\Lambda+\varepsilon}$, we have $u_{\Lambda+\varepsilon} \geq u$ on $\partial
A_4$. Thus
$$
\Delta u \geq \Delta u_{\Lambda+\varepsilon} \text{ in } A_4 \; , \; u \leq u_{\Lambda+\varepsilon} \text{ on } \partial A_4 \,.
$$
By comparison principle, we then conclude that
$u_{\Lambda+\varepsilon} (x) \geq u (x) \text{ for all } x \in A_4.$
We have thus proved that $u_{\Lambda+\varepsilon} (x) \geq u (x)$ for any $x \in \Sigma_{\Lambda+\varepsilon}$. Then, in order to contradict the maximality of $\Lambda$ as given by \eqref{lamax}, it only remains to prove that $\sigma_{\Lambda+\varepsilon} (\overline{\Omega \cap \Sigma_{\Lambda+\varepsilon}}) \subset \overline{\Omega}$. Arguing by contradiction, if this were not true there would exist two sequences $\varepsilon_k \to 0$ and $y_k \in \overline{\Omega \cap \Sigma_{\Lambda+\varepsilon_{k}}}$ such that $x_k := \sigma_{\Lambda+\varepsilon_{k}} (y_k) \notin \overline{\Omega}$. Without loss of generality, due to $\Omega$ being compact, we can suppose that $y_k \in \Omega \cap \Sigma_{\Lambda+\varepsilon_{k}}$, $y_k \to \bar{y}$ and $x_k \to \bar{x}$ as $k \to \infty$. This implies that $\bar{x} = \sigma_\Lambda(\bar{y})$ and $\bar{x} \notin \Omega$, while $\bar{y} \in \overline{\Omega \cap \Sigma_{\Lambda}}$. But by continuity and by the definition of $\Lambda$ in \eqref{lamax}, we have that necessarily $\bar{x} \in \overline{\Omega}$, and therefore $\bar{x} \in \partial \Omega$, \text{i.e.} $\rho (\bar{x})=0$. Lemma \ref{mmp3} implies that
$$
u(\bar{x}) = - G(\bar{x}) = - G(\sigma_\Lambda(\bar{y})) = - G(\bar{y}) = u(\bar{y}) - \frac{m}{m-1} \rho^{m-1}(\bar{y}) \leq u(\bar{y}) = u_\Lambda (\bar{x}) \leq u(\bar{x})
$$
From this we deduce that $\bar{x} = \sigma_\Lambda (\bar{x})$ and also $\bar{x} \in T_\Lambda = \partial \Sigma_\lambda$ which, by Hopf Lemma, implies that $u_{x_1} (\bar{x}) > 0$. Moreover, $\sigma_\Lambda (\bar{x}) = \bar{x}$, which in turn gives that $\bar{x} = \bar{y}$. Now, since $y_k \to \bar{x}$, for sufficiently large $k$ we have that $u_{x_1} (y_k) \geq \frac12 u_{x_1} (\bar{x}) > 0$. In particular, since $y_k \in \Omega$ and $\rho = \frac{m-1}{m} (C+u)^{\frac{1}{m-1}}$, there exists $\tau >0$ independent of $k$ such that $\rho(y_k + \sigma e_1) \geq \rho (y_k) > 0$ for any $k$ large and $0 \leq \sigma \leq \tau$, with $e_1 = (1,0)$ as usual. This means that $y_k + \sigma e_1 \in \Omega$ for $\sigma \in [0,\tau]$. But observe that by definition we deduce that
$$
\sigma_{\Lambda + \varepsilon_{k}} (y_k) = 2(\Lambda + \varepsilon_{k} - y_{k,1}) e_1 + y_k
$$
with $\varepsilon_{k} \searrow 0$ and $\Lambda + \varepsilon_{k} > y_{k,1} \to \Lambda$, which implies that $\sigma_{\Lambda + \varepsilon_{k}} (y_k) \in \Omega$ for $k$ sufficiently large, contradicting our assumption.

Hence $\Lambda = 0$, which implies that $u(x_1,x_2) \leq u(-x_1,x_2)$ for any $x_1 \leq 0$. Repeating the same arguments in the opposite direction, we reach
$u(x_1,x_2) \geq u(-x_1,x_2)$ for any $x_1 \geq 0$. But since
$$u(x_1,x_2) \leq u(-x_1,x_2) \leq u(-(-x_1),x_2) = u(x_1,x_2),$$
we obtain that $u$ is even in $x_1$. By rotational invariance, $u$ is even with respect to any hyperplane through the origin and hence radially symmetric. Hence,
equation \eqref{eq.20} gives the radial symmetry of $\rho$ too. $\square$

\begin{remark}[Properties of compactly supported stationary states]\label{remarkradsym}

\emph{Let $\rho$ be a stationary state to the equation \eqref{KellerS} with compact support. Then, Theorem \ref{symmetry} tells us that $\rho$ is radial around
its center of mass, assumed to be zero without loss of generality, so we have that the equation
\begin{equation}
\frac{m}{m-1}\rho^{m-1}=u+C\label{stateq}
\end{equation}
holds for some constant $C$. Arguing as in Theorem \ref{regulth1}, we find that $\rho$ satisfies the equation
\[
\frac{d}{dr}(\rho\ast\mathcal{K})(r)=\frac{du}{dr}=-\frac {M_{\rho}(r)}{2\pi r}
\]
where $M_{\rho}$ is the mass function \eqref{massfunct} of $\rho$. But then, a simple differentiation of \eqref{stateq} gives
\[
\frac{m}{m-1}\frac{d}{dr}\rho^{m-1}=-\frac {M_{\rho}(r)}{2\pi r}
\]
thus $\rho$ is radially decreasing. The same argument as in the proof of Theorem \ref{regulth1} shows that $\rho$ is smooth inside its support.}
\end{remark}

\begin{theorem} There is a unique up to translations compactly supported steady state to \eqref{KellerS} with mass $M$. Moreover, such steady state is radially
decreasing, continuous, smooth in its support, and it coincides (up to translations) with the global minimizer of the free energy functional $\mathsf{G}$ in $\mathcal{Y}_{M}$.
\end{theorem}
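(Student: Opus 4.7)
The plan is to assemble the final theorem directly from the results already proved in Sections 2--4, with the only preparatory move being a translation that places the center of mass at the origin. Let $\rho$ be any compactly supported steady state of \eqref{KellerS} with mass $M$ in the sense of Definition \ref{stationarystates}. Since \eqref{KellerS} and all associated functionals are translation invariant, I would first pick $x_0 \in \R^2$ to be the center of mass of $\rho$ and consider $\widetilde{\rho}(x) := \rho(x+x_0)$. By construction $\widetilde{\rho} \in \mathcal{Y}_M$, and $\widetilde{\rho}$ remains a compactly supported steady state in the sense of \eqref{steadystdef}.

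Next I would apply Theorem \ref{symmetry} to $\widetilde{\rho}$ to conclude that $\widetilde{\rho}$ is radially symmetric about the origin. Once radial symmetry is in hand, Remark \ref{remarkradsym} applies verbatim: differentiating the Euler--Lagrange identity $\tfrac{m}{m-1}\widetilde{\rho}^{m-1}=u+C$ along the radial direction and using the explicit form $\tfrac{d}{dr}(\mathcal{K}\ast\widetilde{\rho})(r)=-\tfrac{M_{\widetilde{\rho}}(r)}{2\pi r}\leq 0$ shows that $\widetilde{\rho}$ is radially \emph{decreasing} and that the bootstrap argument of Theorem \ref{regulth1} delivers smoothness inside its support. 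Continuity on the whole of $\R^2$ then follows from \eqref{eq.19} with exponent $1/(m-1)$, exactly as in the last line of Theorem \ref{regulth1}.

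At this point $\widetilde{\rho}$ is a radially decreasing, compactly supported, smooth-in-its-support steady state of \eqref{KellerS} with mass $M$. Theorem \ref{uniquenessteady2} (which is itself a consequence of the exponential mass-function convergence in Theorem \ref{uniquenessteady}) asserts that such an object is unique and is precisely the global minimizer $\rho_0$ of $\mathsf{G}$ in $\mathcal{Y}_M$. Therefore $\widetilde{\rho}=\rho_0$ and $\rho(x)=\rho_0(x-x_0)$, which simultaneously yields the uniqueness up to translation, the regularity and monotonicity statements, and the identification with the variational minimizer.

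There is no real obstacle here since the serious technical work has been carried out already: the symmetry step (Theorem \ref{symmetry}, via the non-standard moving plane argument in two dimensions) and the mass-comparison uniqueness step (Theorem \ref{uniquenessteady}) do all the heavy lifting. The only point requiring a small remark is to observe that, having applied Theorem \ref{symmetry}, one indeed obtains a stationary state in the sense of \eqref{steadystdef} to which Theorem \ref{uniquenessteady2} applies; this is immediate because the definition of stationary state used in Theorem \ref{uniquenessteady2} coincides, under the radial and compact-support hypotheses, with Definition \ref{stationarystates}.
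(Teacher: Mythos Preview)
Your proposal is correct and follows essentially the same route as the paper: apply Theorem~\ref{symmetry} to obtain radial symmetry, use Remark~\ref{remarkradsym} for the radially decreasing and smooth-in-support properties, and then invoke Theorem~\ref{uniquenessteady2} for uniqueness and the identification with the global minimizer. Your explicit translation step to place the center of mass at the origin is a welcome clarification. One minor quibble: continuity of a general steady state does not come from~\eqref{eq.19} (which is the Euler--Lagrange characterization of \emph{minimizers}); it is established directly after Definition~\ref{stationarystates} via Sobolev embedding, which already gives $\rho\in C^{0,\alpha}(\mathbb{R}^2)$.
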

\begin{proof}
By Theorem \ref{symmetry} and Remark \ref{remarkradsym} we have that any compactly supported steady state $\rho$ is a radially symmetric decreasing continuous
function smooth in its support. Then Theorem \ref{uniquenessteady2} provides its uniqueness and identifies it with the global minimizer of $\mathsf{G}$ in the
class $\mathcal{Y}_{M}$.
\end{proof}

\subsection*{Acknowledgements}
JAC acknowledges support from projects MTM2011-27739-C04-02,
2009-SGR-345 from Ag\`encia de Gesti\'o d'Ajuts Universitaris i de Recerca-Generalitat de Catalunya, the Royal Society through a Wolfson Research Merit Award,
and the Engineering and Physical Sciences Research Council (UK) grant number EP/K008404/1. \smallskip\\DC acknowledges support from projects
MTM2011-27739-C04-02, 2009-SGR-345 from Ag\`en\-cia de Gesti\'o d'Ajuts Universitaris i de Recerca-Generalitat de Catalunya and by PRIN09 project \textit{Nonlinear
elliptic problems in the study of vortices and related topics} (ITALY)\smallskip.\\BV acknowledges support from the INDAM-GNAMPA project 2013 \textit{Equazioni di
evoluzione con termini non locali} (ITALY).


\begin{thebibliography}{10}

\bibitem{AmbrGi}
{\sc L.~Ambrosio, N.~Gigli, and G.~Savar{\'e}}, {\em Gradient flows in metric
  spaces and in the space of probability measures}, Lectures in Mathematics ETH
  Z\"urich, Birkh\"auser Verlag, Basel, second~ed., 2008.

\bibitem{Bed1}
{\sc J.~Bedrossian}, {\em Global minimizers for free energies of subcritical
  aggregation equations with degenerate diffusion}, Appl. Math. Lett., 24
  (2011), pp.~1927--1932.

\bibitem{Bennett}
{\sc C.~Bennett and R.~Sharpley}, {\em Interpolation of operators}, vol.~129 of
  Pure and Applied Mathematics, Academic Press Inc., Boston, MA, 1988.

\bibitem{BianLiu}
{\sc S.~Bian and J.-G. Liu}, {\em Dynamic and steady states for
  multi-dimensional {K}eller-{S}egel model with diffusion exponent {$m>0$}},
  Comm. Math. Phys., 323 (2013), pp.~1017--1070.

\bibitem{BlaCaCarSiam}
{\sc A.~Blanchet, V.~Calvez, and J.~A. Carrillo}, {\em Convergence of the
  mass-transport steepest descent scheme for the subcritical
  {P}atlak-{K}eller-{S}egel model}, SIAM J. Numer. Anal., 46 (2008),
  pp.~691--721.

\bibitem{BCC}
{\sc A.~Blanchet, E.~A. Carlen, and J.~A. Carrillo}, {\em Functional
  inequalities, thick tails and asymptotics for the critical mass
  {P}atlak-{K}eller-{S}egel model}, J. Funct. Anal., 262 (2012),
  pp.~2142--2230.

\bibitem{CarBlanLau}
{\sc A.~Blanchet, J.~A. Carrillo, and P.~Lauren{\c{c}}ot}, {\em Critical mass
  for a {P}atlak-{K}eller-{S}egel model with degenerate diffusion in higher
  dimensions}, Calc. Var. Partial Differential Equations, 35 (2009),
  pp.~133--168.

\bibitem{BCM}
{\sc A.~Blanchet, J.~A. Carrillo, and N.~Masmoudi}, {\em Infinite time
  aggregation for the critical {P}atlak-{K}eller-{S}egel model in {$\Bbb
  R^2$}}, Comm. Pure Appl. Math., 61 (2008), pp.~1449--1481.

\bibitem{BDP}
{\sc A.~Blanchet, J.~Dolbeault, and B.~Perthame}, {\em Two-dimensional
  {K}eller-{S}egel model: optimal critical mass and qualitative properties of
  the solutions}, Electron. J. Differential Equations,  (2006), pp.~No. 44, 32
  pp. (electronic).

\bibitem{CalCar06}
{\sc V.~Calvez and J.~A. Carrillo}, {\em Volume effects in the {K}eller-{S}egel
  model: energy estimates preventing blow-up}, J. Math. Pures Appl. (9), 86
  (2006), pp.~155--175.

\bibitem{CD}
{\sc J.~F. Campos and J.~Dolbeault}, {\em Asymptotic estimates for the
  parabolic-elliptic keller-segel model in the plane}, preprint,  (2013).

\bibitem{CarlenLoss}
{\sc E.~Carlen and M.~Loss}, {\em Competing symmetries, the logarithmic {HLS}
  inequality and {O}nofri's inequality on {$S^n$}}, Geom. Funct. Anal., 2
  (1992), pp.~90--104.

\bibitem{CMV1}
{\sc J.~A. Carrillo, R.~J. McCann, and C.~Villani}, {\em Kinetic equilibration
  rates for granular media and related equations: entropy dissipation and mass
  transportation estimates}, Rev. Mat. Iberoamericana, 19 (2003),
  pp.~971--1018.

\bibitem{CT}
{\sc D.~Chae and G.~Tarantello}, {\em On planar selfdual electroweak vortices},
  Ann. Inst. H. Poincar\'e Anal. Non Lin\'eaire, 21 (2004), pp.~187--207.

\bibitem{ZUZ}
{\sc Z.~Do{\v{s}}l{\'a} and N.~Partsvania}, {\em Oscillation theorems for
  second order nonlinear differential equations}, Nonlinear Anal., 71 (2009),
  pp.~e1649--e1658.

\bibitem{Gilbarg}
{\sc D.~Gilbarg and N.~S. Trudinger}, {\em Elliptic partial differential
  equations of second order}, Classics in Mathematics, Springer-Verlag, Berlin,
  2001.
\newblock Reprint of the 1998 edition.

\bibitem{Hardy}
{\sc G.~H. Hardy, J.~E. Littlewood, and G.~P{\'o}lya}, {\em Inequalities},
  Cambridge, at the University Press, 1952.
\newblock 2d ed.

\bibitem{J}
{\sc A.~J{\"u}ngel}, {\em Quasi-hydrodynamic semiconductor equations}, Progress
  in Nonlinear Differential Equations and their Applications, 41, Birkh\"auser
  Verlag, Basel, 2001.

\bibitem{Kim}
{\sc I.~Kim and Y.~Yao}, {\em The {P}atlak-{K}eller-{S}egel model and its
  variations: properties of solutions via maximum principle}, SIAM J. Math.
  Anal., 44 (2012), pp.~568--602.

\bibitem{LerayLions}
{\sc J.~Leray and J.-L. Lions}, {\em Quelques r\'esulatats de {V}i\v sik sur
  les probl\`emes elliptiques nonlin\'eaires par les m\'ethodes de
  {M}inty-{B}rowder}, Bull. Soc. Math. France, 93 (1965), pp.~97--107.

\bibitem{Li1}
{\sc P.-L. Lions}, {\em Minimization problems in {$L^{1}({\bf R}^{3})$}}, J.
  Funct. Anal., 41 (1981), pp.~236--275.

\bibitem{Li2}
\leavevmode\vrule height 2pt depth -1.6pt width 23pt, {\em The
  concentration-compactness principle in the calculus of variations. {T}he
  locally compact case. {I}}, Ann. Inst. H. Poincar\'e Anal. Non Lin\'eaire, 1
  (1984), pp.~109--145.

\bibitem{TheseCalvez}
{\sc R.~J. McCann}, {\em A convexity principle for interacting gases}, PhD
  Thesis ENS.

\bibitem{McCann97}
\leavevmode\vrule height 2pt depth -1.6pt width 23pt, {\em A convexity
  principle for interacting gases}, Adv. Math., 128 (1997), pp.~153--179.

\bibitem{Strohmer}
{\sc G.~Str{\"o}hmer}, {\em Stationary states and moving planes}, in Parabolic
  and {N}avier-{S}tokes equations. {P}art 2, vol.~81 of Banach Center Publ.,
  Polish Acad. Sci. Inst. Math., Warsaw, 2008, pp.~501--513.

\bibitem{S3}
{\sc Y.~Sugiyama}, {\em Global existence in sub-critical cases and finite time
  blow-up in super-critical cases to degenerate {K}eller-{S}egel systems},
  Differential Integral Equations, 19 (2006), pp.~841--876.

\bibitem{S1}
\leavevmode\vrule height 2pt depth -1.6pt width 23pt, {\em Application of the
  best constant of the {S}obolev inequality to degenerate {K}eller-{S}egel
  models}, Adv. Differential Equations, 12 (2007), pp.~121--144.

\bibitem{S2}
\leavevmode\vrule height 2pt depth -1.6pt width 23pt, {\em Time global
  existence and asymptotic behavior of solutions to degenerate quasi-linear
  parabolic systems of chemotaxis}, Differential Integral Equations, 20 (2007),
  pp.~133--180.

\bibitem{Talenti}
{\sc G.~Talenti}, {\em Elliptic equations and rearrangements}, Ann. Scuola
  Norm. Sup. Pisa Cl. Sci. (4), 3 (1976), pp.~697--718.

\end{thebibliography}

\end{document}